\newcommand{\ndN}{\mathbb{N}}
\newcommand{\ndZ}{\mathbb{Z}}
\newcommand{\ndR}{\mathbb{R}}
\renewcommand{\Pr}[1]{\mathbb{P}(#1)}
\newcommand{\Ex}[1]{\mathbb{E}[#1]}
\newcommand{\Exb}[1]{\mathbb{E}\left[#1\right]}
\newcommand{\one}{\mathbbm{1}}
\newcommand{\cT}{\mathcal{T}}
\newcommand{\cV}{\mathcal{V}}
\newcommand{\cH}{\mathcal{H}}
\newcommand{\cP}{\mathcal{P}}
\newcommand{\cQ}{\mathcal{Q}}
\newcommand{\fmT}{\mathfrak{T}}
\newcommand{\cE}{\mathcal{E}}
\newcommand{\he}{\mathrm{h}}
\newcommand{\spa}{\mathsf{span}}
\newcommand{\UHT}{\mathcal{U}_{\infty}}
\newcommand{\VHT}{\mathcal{V}_{\infty}}
\newcommand{\eqdist}{\,{\buildrel d \over =}\,}
\newcommand{\convdis}{\,{\buildrel d \over \longrightarrow}\,}
\newcommand{\convp}{\,{\buildrel p \over \longrightarrow}\,}
\newtheorem{theorem}{Theorem}[section]
\newtheorem{proposition}[theorem]{Proposition}
\newtheorem{lemma}[theorem]{Lemma}
\numberwithin{equation}{section}
\title{\textbf{Local limits of large Galton--Watson trees rerooted at a random vertex}}
\date{}
\author{Benedikt Stufler\thanks{\'Ecole Normale Sup\'erieure de Lyon, E-mail: benedikt.stufler@ens-lyon.fr; The author is supported by the German Research Foundation DFG, STU 679/1-1}}
\begin{document}
	
	\maketitle
	
\let\thefootnote\relax\footnotetext{ \\\emph{MSC2010 subject classifications}. 60J80, 60B10 \\
\emph{Keywords and phrases.} local weak limits, simply generated trees, fringe distributions}

\vspace {-0.5cm}

\begin{abstract}
We discuss various forms of convergence of the vicinity of a uniformly at random selected vertex in random simply generated trees, as the size  tends to infinity. For the standard case of a critical Galton--Watson tree conditioned to be large the limit is the invariant random  sin-tree constructed by Aldous~(1991). In the condensation regime, we describe in complete generality the asymptotic local behaviour from a random vertex up to its first ancestor with large degree. Beyond this distinguished ancestor, different behaviour may occur, depending on the branching weights. In a  subregime of complete condensation, we obtain convergence toward a novel limit tree, that  describes the asymptotic shape of the vicinity of the full path from a random vertex to the root vertex. This includes the case where the offspring distribution follows a power law up to a factor that varies slowly at infinity. 
\end{abstract}

\section{Introduction}

The study of the asymptotic local behaviour of the vicinity of the fixed root vertex of random trees has received considerable attention in recent literature. Jonsson and Stef\'ansson~\cite{MR2764126} described a phase transition between an infinite spine case and a condensation setting for large Galton--Watson trees with a power-law offspring distribution. A third regime for random simply generated trees with superexponential branching weights was studied by Janson, Jonsson and Stef\'ansson~\cite{MR2860856}. The asymptotic shape of large simply generated trees as their size tends to infinity was later described in complete generality by Janson~\cite{MR2908619}. Abraham and Delmas~\cite{MR3227065, MR3164755} classified the limits of conditioned Galton--Watson trees as the total number of vertices with outdegree in a given fixed set tends to infinity. Limits of Galton--Watson trees having a large number of protected nodes were established by Abraham, Bouaziz, and Delmas~\cite{2015arXiv150902350A}. The asymptotic shape of conditioned multi-type Galton--Watson trees was studied  by Stephenson~\cite{Stephenson2016}, Abraham, Delmas, and Guo \cite{2015arXiv151101721A}, and P{\'e}nisson~\cite{MR3476213}.

Clearly considerable effort and progress is being made in understanding  local limits of random trees that describe the asymptotic behaviour near the fixed root vertex, and for random simply generated trees even a complete classification is available. As for the question of the asymptotic shape of the vicinity of a random vertex, Aldous~\cite{MR1102319} studied in his pioneering work asymptotic fringe distributions for general families of random trees. For the case of critical Galton--Watson trees, he established, at least when the offspring distribution has finite variance, convergence of the tree obtained by rerooting at a random vertex. A recent work by Holmgren and Janson~\cite{2016arXiv160103691H} studied fringe trees and extended fringe trees of models of random trees that may be described by the family tree of a Crump–Mode–Jagers branching process stopped at a suitable time, including random recursive trees, preferential attachment trees, fragmentation trees and $m$-ary search trees. 

Janson~\cite{MR2908619} distinguishes three types of simply generated trees, numbered I, II and III, and for each the local limit exhibits a distinguishing characteristic. We use this terminology in our study of the vicinity of a random vertex. In the type I setting, the simply generated tree $\cT_n$ is distributed like a critical Galton--Watson tree conditioned on having $n$ vertices. Thus the height of a random vertex in $\cT_n$ is typically large and  extended fringe trees are typically small. In this regime, the limit is given by the random sin-tree constructed by Aldous~\cite{MR1102319}. Here the  word \emph{sin} refers to the fact  that, like the Kesten tree, this tree has almost surely up to finite initial segments only a single infinite path. When the offspring distribution has finite variance, we may even verify total variational convergence of the extended fringe subtree up to  $o(\sqrt{n})$-distant ancestors. 

While trees in the type I regime usually have small maximum degree, the types II and III are characterized by the appearance of vertices with large degree, which may be viewed as a form of condensation. Specifically, type II simply generated trees correspond to subcritical Galton--Watson trees with a heavy-tailed offspring distribution, and type III simply generated trees have superexponential branching weights such that no equivalent conditioned Galton--Watson tree exists. Our main contribution is in this condensation setting, where contrary to the type~I regime a random vertex may be near to the root, and extended fringe trees may have size comparable to the total number of vertices of $\cT_n$, as we are likely to encounter an ancestor with large degree. This is also a major difference to the settings addressed in the mentioned works by Aldous~\cite{MR1102319} and Holmgren and Janson~\cite{2016arXiv160103691H}.

We set up a compact space that encodes rooted plane trees that are centered around a second distinguished vertex, and establish several limit theorems. For arbitrary weight-sequences having type II or III, we establish a limit that   describes the vicinity of a random vertex up to and including its first ancestor with large degree. Here \emph{large} means having outdegree bigger than a deterministic sequence that tends to infinity sufficiently slowly.  The asymptotic shape of what lies beyond this ancestor appears to depend on the branching weights. In a way, the vertex with large degree obstructs the view to older generations.

We describe a novel limit object $\cT^*$ given by a random pointed plane tree, in which the pointed vertex has random distance from its first ancestor with infinite degree, and this ancestor again has a random number of ancestors with finite degree before the construction breaks off. For arbitrary weight-sequences, the asymptotic probability for the vicinity of a random vertex of $\cT_n$ to have a specific shape that admits at most one single ancestor of large degree, but allows ancestors with small degrees afterwards, coincides with the corresponding probability for the tree $\cT^*$. Our approach is based on a heavily modified depth-first-search to explore the tree $\cT_n$.  This yields information on how parts of a limit tree for the complete vicinity, that is not truncated at the first large ancestor, must look, if the simply generated tree $\cT_n$ pointed at  a random vertex converges weakly (along a subsequence). Note also that the compactness of the space, in which we formulate our limits, guarantees the existence of such subsequences. Thus the obstruction by the ancestor with large degree, that prevents us from seeing older generations, is not a complete blockage. However, this is not yet sufficient to deduce convergence in the space of pointed plane trees. In general, the tip of the backwards growing spine, where the construction of $\cT^*$ breaks off, may correspond to the root vertex of $\cT_n$, but just as well to a second ancestor with large degree. 

If the branching weights belong to a general regime of complete condensation, we manage to surpass the blockage and deduce weak convergence  toward~$\cT^*$. There are two main steps involved. First, we show that convergence toward $\cT^*$ is in fact equivalent to weak convergence of the height of a random vertex in $\cT_n$ to the height of the pointed vertex in the tree $\cT^*$, which in the type II regime is distributed like $1$ plus the sum of two independent identically distributed geometric random variables, and in the type III regime equals $1$. In this case, the root of $\cT^*$ really corresponds to the root of $\cT_n$. The second step verifies this property in the case of complete condensation, where the maximum degree of $\cT_n$ has the correct order.

In particular, Kortchemski's central limit theorem for the maximum degree~\cite[Theorem 1]{MR3335012} allows us to deduce convergence toward $\cT^*$ in the general case of a subcritical Galton--Watson tree conditioned on having $n$ vertices, if the offspring distribution $\xi$ satisfies \[\Pr{\xi = k} = f(k) k^{-\alpha}\] for a constant $\alpha >2$ and a function $f$ that varies slowly at infinity. 
In the type III regime where branching weights grow superexponentially fast, we consider the specific case where
\[
	\omega_k = k!^{\alpha}
\]
for $\alpha>0$. It is known that for these weights the maximum degree of $\cT_n$ has order $n + o_p(n)$, which may also be seen as complete condensation, see Janson, Jonsson,  and Stef{\'a}nsson~\cite{MR2860856} and Janson \cite[Example 19.36]{MR2908619}. Thus here the tree $\cT^*$ is also the weak limit of the simply generated tree $\cT_n$ pointed at a random vertex. There are, however, also   examples of superexponential branching weights that exhibit a more irregular behaviour \cite[Example 19.38]{MR2908619}, in which we are going to argue that weak convergence toward $\cT^*$ does not hold. 

\subsection*{Outline}
In Section~\ref{sec:notation} we fix basic notations, and Section~\ref{sec:simply} is dedicated to recall necessary background on simply generated trees. In Section~\ref{sec:space} we describe the metric space of rooted plane trees that are centered at a pointed vertex. This will be the setting in which we formulate our limit theorems.  In Section~\ref{sec:limits} we present our main results, and in  Section~\ref{sec:proofs} their proofs.

\section{Notation}
\label{sec:notation}

We let $\ndN$ denote the set of positive integers and set $\ndN_0 = \ndN \cup \{0\}$. The positive real numbers are denoted by $\ndR_{>0}$. Throughout, we usually assume that all considered random variables are defined on a common probability space.  The \emph{total variation distance} between two random variables $X$ and $Y$ with values in a countable state space $S$ is defined by
\[
d_{\textsc{TV}}(X,Y) = \sup_{\cE \subset S} |\Pr{X \in \cE} - \Pr{Y \in \cE}|.
\]
A sequence of real-valued random variables $(X_n)_{n \ge 1}$ is \emph{stochastically bounded}, if for each $\epsilon > 0$ there is a constant $M>0$ with
\[
\limsup_{n \to \infty} \Pr{ |X_n| \ge M} \le \epsilon.
\]
We denote this by $X_n = O_p(1)$. Likewise, we write $X_n = o_p(1)$ if the sequence converges to $0$ in probability. We use  $\convdis$ and $\convp$ to denote convergence in distribution and probability.
A function 
\[
h: \, \ndR_{>0} \to \ndR_{>0} \]is termed \emph{slowly varying}, if for any fixed $t >0$ it holds that
\[
\lim_{x \to \infty}\frac{h(tx)}{h(x)} = 1.
\]
Given sets $M$ and $N$, we let $N^M$ denote the set of all maps from $M$ to $N$. We also let $N^{(\ndN)}$ denote the set of all finite sequences of elements from $N$. 

\section{Simply generated trees}
\label{sec:simply}
A \emph{plane tree} is a rooted tree in which the offspring set of each vertex is endowed with a linear order. (Such trees are also sometimes referred to as planted plane trees or corner rooted plane trees, in order to distinguish them from related planar structures~\cite{MR2484382}.) Given a plane tree $T$ and a vertex $v \in T$ we let $d^+_T(v)$ denote its \emph{outdegree}, that is, the number of offspring. Its \emph{height} $\he_T(v)$ is its distance from the root-vertex.

We let $\mathbf{w} =(\omega_i)_{i \ge 0}$ denote a sequence of non-negative weights satisfying $\omega_0>0$ and $\omega_k >0$ for at least one $k \ge 2$.  The weight of a plane tree $T$ is defined by
\[
	\omega(T) = \prod_{v \in T} \omega_{d^+_T(v)}.
\]
The simply generated tree $\cT_n$ with $n$ vertices gets drawn from the set of all $n$-vertex plane trees with probability proportional to its weight. Galton--Watson trees conditioned on having a fixed  number of vertices are encompassed by this model of random plane trees. Of course, the tree $\cT_n$ is only well-defined if there is at least one plane tree with $n$ vertices that has positive weight. We set
\[
	\spa(\mathbf{w}) = \gcd\{i \ge 0 \mid \omega_i > 0 \}.
\]
As argued in \cite[Corollary 15.6]{MR2908619}, $n$-sized trees with positive weight may only exist for $n \equiv 1 \mod \spa(\mathbf{w})$, and conversely, they always exist if $n$ is large enough and belongs to this congruence class. We tacitly only consider such $n$ throughout this paper.

\subsection{Three types of weight-sequences}
\label{sec:types}
Janson~\cite[Chapter 8]{MR2908619} distinguishes three types of weight-sequences. The classification is as follows. Let $\rho_\phi$ denote the radius of convergence of the generating series
\[
	\phi(z) = \sum_{k \ge 0} \omega_k z^k.
\]
As argued in \cite[Lemma 3.1]{MR2908619}, if $\rho_\phi >0$ then the function
\[
	\psi(t) =  \phi'(t)t/\phi(t)
\]
admits a limit
\[
	\nu = \lim_{t \nearrow \rho_\phi} \psi(t) \in ]0, \infty]
\]
with the following properties. If $\nu \ge 1$, then there is a unique number $\tau$ with $\psi(\tau) = 1$ and we say the weight sequence $\mathbf{w}$ has type I. If $0 < \nu <1$, then we set $\tau := \rho_\phi < \infty$ and say $\mathbf{w}$ has type II. If $\rho_\phi = 0$, we say $\mathbf{w}$ has type III and set $\nu=0$ and $\tau = 0$.

The constant $\nu$ has a natural interpretation as the supremum of the means of all probability weight sequences equivalent to $\mathbf{w}$. The inclined reader may see \cite[Remark 4.3]{MR2908619}  for details.

\subsection{An associated Galton--Watson tree}
\label{sec:gwt}
We define the probability distribution $(\pi_k)_k$ on $\ndN_0$ by 
\begin{align}
	\label{eq:tt}
	\pi_k = \tau^k \omega_k / \phi(\tau).
\end{align}
The mean and variance of the distribution $(\pi_k)_k$ are given by
\begin{align}
	\label{eq:mu}
	\mu = \min(\nu,1)
\end{align}
and
\begin{align}
	\label{eq:variance}
	\sigma^2 = \tau \psi'(\tau) \le \infty.
\end{align}
We let $\xi$ denote a random non-negative integer with density $(\pi_k)_k$, and $\cT$ a Galton--Watson tree with offspring distribution $\xi$. Note that if $\mathbf{w}$ has type III, then $\xi=0$ almost surely and the tree $\cT$ consists of a single deterministic vertex.  As detailed in \cite[Section 4]{MR2908619}, if $\mathbf{w}$ has type I or II then the simply generated tree $\cT_n$ is distributed like the Galton--Watson tree $\cT$ conditioned on having $n$ vertices.

\section{The space of pointed plane trees}
\label{sec:space}

\subsection{Centering at a specified vertex}
The offspring of each vertex in a plane tree is  endowed with a linear order. We usually imagine a planar embedding where the root is at the top and the offspring of each vertex is ordered from the left to the right below it, ascendingly according to the corresponding linear order. Thus the "left-most" offspring is the minimum of the order. This is of course purely a matter of taste. Some prefer their trees to grow upwards, but regardless of the way for visualizing plane trees, we may use terms like height and depth-first-search in their usual sense without risk of confusion. In the present work we will also encounter plane trees that have no root, but whose vertex sets are endowed with a partial order that specifies the ancestry relations, and whose offspring sets are endowed with a linear order that is not required to have a smallest element.

Traditionally, plane trees are encoded as subtrees of the \emph{Ulam--Harris tree}. The Ulam--Harris tree $\UHT$ is an infinite plane tree with vertex set
\[
	\VHT = \ndN^{(\ndN)}
\]
given by the space of finite sequences of non-negative integers. Its root vertex is the unique sequence  with length zero, and the ordered offspring of a vertex $v$ are the concatenations $(v,i)$ for $i \ge 1$. Thus a plane tree is a subtree of the Ulam--Harris tree that contains its root, such that the offspring set of each vertex is an initial segment of the offspring of the corresponding vertex in $\UHT$.  Here we explicitly allow trees with  infinitely many vertices, and  vertices with countably infinite outdegree. If all outdegrees of a plane tree are finite, we say that it is \emph{locally finite}. The tree is \emph{finite}, if its total number of vertices is. We will usually let $o$ denote the root-vertex of a plane tree.

Subtrees of the Ulam--Harris tree are however not an adequate form to represent the vicinity of a specified vertex in a plane tree. If this vertex does not coincide with the root of the tree, then it has an ordered sequence of ancestors and possibly also siblings that lie to the left and right of it.  If we look at a random vertex of the simply generated $\cT_n$, then it may happen that the number of siblings to the left and/or right of it is asymptotically large, or that its distance from the root vertex is large. A sensible space in which we may describe the limit of the vicinity of the random vertex in $\cT_n$ must hence contain trees with a center that may have infinitely many ancestors, such that each may have infinitely many siblings to the left and/or right of it, including the center vertex itself.

For this reason, we describe the construction of an infinite tree $\UHT^\bullet$ that is embedded in the plane and has a spine $(u_i)_{i \ge 0}$ that grows "backwards". That is, we construct the tree $\UHT^\bullet$ by starting with an infinite path $u_0, u_1, \ldots$ $u_{i}$ of abstract vertices and define $u_{i}$ to be a parent of $u_{i-1}$ for all $i \ge 1$. Additionally, any vertex $u_i$ with $i \ge 1$ receives an infinite number of vertices to the left and to the right of its distinguished offspring $u_{i-1}$, and each of these "non-centered" offspring vertices is the root of a copy of the Ulam--Harris tree $\UHT$.  To conclude the construction, the start-vertex $u_0$ of the spine also gets identified with the root of a copy of $\UHT$.  We let $\VHT^\bullet$ denote the vertex-set of the tree $\UHT^\bullet$.  The tree $\UHT^\bullet$ is illustrated in Figure~\ref{fi:uht}. 

Note that the vertex set $\VHT^\bullet$ carries a natural partial order (given by the transitive hull of the parent-child relations specified in the construction of $\UHT^\bullet$), and the offspring set of any given vertex carries a natural linear order. This allows us to continue using the terms \emph{ancestor} and \emph{offspring} in this context.

The precise realization of the tree  $\UHT^\bullet$ will not be relevant for our arguments. One way to make its construction formal would be to define $\VHT^\bullet$ as a subset of $\ndN_0 \times \ndZ \times \ndN^{(\ndN)}$, with $u_i$ corresponding to $(i,0,\emptyset)$ for all $i \ge 0$, and $u_i$ having the linearly ordered offspring set $\{(i,j, \emptyset) \mid j \in \ndZ\}$ for all $i \ge 1$. Any point $(i,j, \emptyset)$ with either $(i,j) = (0,0)$, or $i \ge 1$ and $j \in \ndZ \setminus\{0\}$ is the root of a copy of the Ulam--Harris tree with vertex set $\{(i,j,v) \mid v \in \ndN^{(\ndN)}\}$.


A  plane tree $T$ together with a distinguished vertex $v_0$ is called a \emph{pointed} plane tree, and may be interpreted in a canonical way as a subtree of $\UHT^\bullet$. To do so, let $v_0, v_1, \ldots, v_k$ denote the path from $v_0$ to the root of $T$. This way, any vertex $v_i$ for $i \ge 1$ may have offspring to the left and to the right of $v_{i-1}$. Thus there is a unique order-preserving and outdegree preserving embedding of $T$ into $\UHT^\bullet$ such that $v_i$ corresponds to $u_i$ for all $0 \le i \le k$. Compare with Figure~\ref{fi:uht}.

\begin{figure}[t]
	\centering
	\begin{minipage}{1.0\textwidth}
		\centering
		\includegraphics[width=0.7\textwidth]{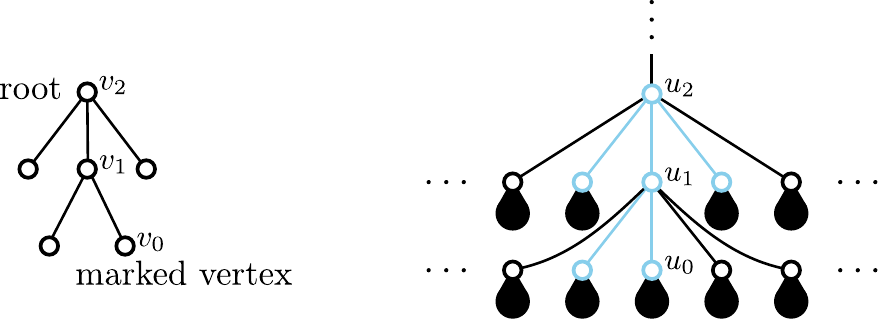}
		\caption{Embedding of a pointed plane tree into the tree $\UHT^\bullet$. Each black blob represents a copy of the Ulam--Harris tree.}
		\label{fi:uht}
	\end{minipage}
\end{figure}

\subsection{Topological properties}
Any plane tree $T$ may be identified with its family of outdegrees
\[
	(d^+_T(v))_{v \in \VHT} \in \overline{\ndN}_0^{\VHT},
\]
where we set $\overline{\ndN}_0 = \ndN_0 \cup \{\infty\}$. Here we use the  convention $d^+_T(v) = 0$ if $v \in \VHT$ is not a vertex of the tree $T$. We endow $\overline{\ndN}_0$ with the one-point compactification topology of the discrete space $\ndN_0$. Thus plane trees are elements of the compact product space $\overline{\ndN}_0^{\VHT}$. It is not hard to see that the subspace
\[
	\fmT \subset \overline{\ndN}_0^{\VHT}
\]
of all plane trees is closed.

Similarly, we may identify a pointed plane tree $T^\bullet = (T,v_0)$ with the corresponding family of outdegrees $(d^+_{T^\bullet}(v))_{v \in \VHT^\bullet}$, such that 
\begin{align}
	\label{eq:first}
	d^+_{T^\bullet}(v) \in \overline{\ndN}_0 
\end{align}
for $v \notin \{u_1, u_2, \ldots\}$, and
\begin{align}
	\label{eq:second}
d^+_{T^\bullet}(u_i) \in \{*\} \sqcup (\overline{\ndN}_0 \times  \overline{\ndN}_0), \qquad i \ge 1.
\end{align}
Here  the two numbers represent the number of offspring vertices to the left and right of the distinguished son $u_{i-1}$, and the $*$-placeholder represents the fact that the vertex has no offspring.

Since $\overline{\ndN}_0$ is a compact Polish space, so are the product  $\overline{\ndN}_0 \times  \overline{\ndN}_0$ and  the disjoint union topology on $\{*\} \sqcup (\overline{\ndN}_0 \times  \overline{\ndN}_0)$. Hence the space of all families $(d^+(v))_{v \in \cV^\bullet_\infty}$ satisfying
\[
 d^+(v) \in \begin{cases} \overline{\ndN}_0 &\text{for $v \notin \{u_1, u_2, \ldots\}$} \\ \{*\} \sqcup (\overline{\ndN}_0 \times  \overline{\ndN}_0) &\text{for $v \in \{u_1, u_2, \ldots\}$ } \end{cases}
\]
is the product of countably many compact Polish spaces, and hence also compact and Polish.  The subset $\fmT^\bullet$ of all elements that correspond to trees (that is, connected acyclic graphs) is closed, and hence also a compact Polish space with respect to the subspace topology.

Checking that  $\fmT^\bullet$ is closed is analogous to the arguments for $\fmT$: We may define the subset $\fmT^\bullet$ by certain local conditions.  In order for a family $(d^+(v))_{v \in \VHT^\bullet}$ in the product space to belong to $\fmT^\bullet$, we require for all $i \ge 1$ that $d^+(u_i) = *$ implies $d^+(u_{i+1}) = *$, and that $d^+(u_i) = (a,b) \in \overline{\ndN}_0 \times  \overline{\ndN}_0$ requires that $d^+(v)=0$ for all siblings $v$ of $u_{i-1}$ that lie more than $b$ to the right of $u_{i-1}$ or more than $b$ to the left of $u_{i-1}$. We furthermore require that for any vertex $u \in \VHT^\bullet \setminus \{u_1, u_2, \ldots\}$ with $d^+(u)=k \in \ndN$ it follows that $d^+(v) =0$ for all offspring vertices $v$ of $u$ that are placed more than $k$ to the right in the linear order of the offspring set of $u$. Now, if $\tau_n^\bullet$ is a deterministic sequence in the product space that converges toward a limit $\overline{\tau} \notin \fmT^\bullet$, then the limit must violate one of these conditions. But this implies that $\tau_n^\bullet$ violates this condition as well for all sufficiently large $n$. So, by contraposition,  $\fmT^\bullet$ must be  closed.

\section{The limit theorems}
\label{sec:limits}

As discussed in Section~\ref{sec:simply} there is a probability distribution $(\pi_k)_k$ associated with the weight sequence $\mathbf{w}$, with density given in \eqref{eq:tt}. Let $\xi$ \label{pp:xi} be distributed according to $(\pi_k)_k$ and let $\cT$ be a $\xi$-Galton--Watson tree. By Equation~\ref{eq:mu} it holds that \[\mu := \Ex{\xi}\le1.\] We may consider the {\em size-biased} random variable $\hat{\xi}$ with values in $\overline{\ndN}_0 $ and distribution given by
\[
\Pr{\hat{\xi} = k} = k \pi_k \quad \text{and} \quad \Pr{\hat{\xi} = \infty} = 1 - \mu.
\]
For any tree $T$ and any vertex $v\in T$ we let $f(T,v)$ denote the    \emph{fringe-subtree} of $T$ at $v$. That is, the maximal subtree of $T$ that is rooted at the vertex $v$.

Throughout the following, we let $v_0$ denote  a uniformly at random selected vertex of the simply generated plane tree $\cT_n$, that in the type I and II regime is distributed like the Galton--Watson tree $\cT$ conditioned on having $n$ vertices.

\subsection{The type I regime}
If the weight-sequence $\mathbf{w}$ has type I, then $\hat{\xi} < \infty$ almost surely, and we define the random pointed tree $\cT^{*}$ as follows.
Let $u_0$ be the root of an independent copy of the Galton--Watson tree $\cT$. For each $i \ge 1$, we let $u_i$ receive offspring according to an independent copy of $\hat{\xi}$. The vertex $u_{i-1}$ gets identified with an uniformly at random chosen offspring of $u_i$. All other offspring vertices of $u_i$ becomes the root of an independent copy of the Galton--Watson tree $\cT$. Compare with Figure~\ref{fi:ald}.

\begin{figure}[t]
	\centering
	\begin{minipage}{1.0\textwidth}
		\centering
		\includegraphics[width=0.15\textwidth]{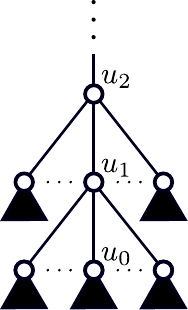}
		\caption{The limit tree $\cT^*$ in the type I regime. Each triangle represents an independent copy of the Galton--Watson tree $\cT$. For each $i \ge 1$ the vertex $u_i$ receives offspring according to an independent copy of $\hat{\xi}$, and the location of $u_{i-1}$ within that offspring set is chosen uniformly at random.}
		\label{fi:ald}
	\end{minipage}
\end{figure}  

\begin{theorem}
	\label{te:case1}
If the weight-sequence $\mathbf{w}$ has type I, then
	\[
		(\cT_n, v_0) \convdis \cT^{*}
	\]
	in the space $\fmT^\bullet$. 
\end{theorem}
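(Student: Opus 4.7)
Since $\fmT^\bullet$ sits as a closed subset of a compact product of one-point compactifications, weak convergence $(\cT_n, v_0) \convdis \cT^*$ is equivalent to pointwise convergence of all cylinder probabilities of the form $\Pr{d^+(v) = k_v, v \in V}$ for finite $V \subset \VHT^\bullet$ and finite-valued prescriptions $(k_v)_{v \in V}$ (note that $\hxi < \infty$ almost surely in type~I, so only finite prescriptions matter). Without loss of generality I take $V$ to contain a spine prefix $u_0, u_1, \ldots, u_h$ together with the finite family of prescribed non-spine vertices, and show that both sides agree in the limit.

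On the limit side, the construction of $\cT^*$ together with $\mu = 1$ yields the formula directly: at each spine ancestor $u_i$ with $i \ge 1$, the joint probability of $\hxi_i = k$ and a prescribed position of the distinguished child is $(k \pi_k / \mu)(1/k) = \pi_k$, while the root $u_0$ and each non-spine offspring root an independent copy of $\cT$ and contribute $\pi_{k_v}$ on every prescribed outdegree. Hence $\Pr{\cT^* \in [V, (k_v)]} = \prod_{v \in V} \pi_{k_v}$.

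On the pre-limit side, use that $(\cT_n, v_0) \eqdist (\cT, v_0) \mid |\cT| = n$ with $v_0$ uniform, so a matching pointed $(t, v)$ of size $n$ has weight $\prod_w \pi_{d^+(w)}/(n \Pr{|\cT| = n})$. I decompose a match by the height $H \ge h$ of $v$. The prescribed pattern contributes $\prod_{v \in V} \pi_{k_v}$; summing the unconstrained ancestor spine from $u_h$ upward over outdegrees and positions of the distinguished child contributes a factor $\mu^{H-h} = 1$ together with a change of the unconstrained outdegrees to the size-biased law $\hxi$; the sibling forest hanging off the whole spine has random root count $s(H) = C_0 + \sum_{i=h+1}^H (\hxi_i - 1)$ and required total size $n - M - (H - h)$, with constants $C_0, M$ determined by the prescribed part. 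Kemperman's formula together with the local central limit theorem for $S_N = X_1 + \cdots + X_N$, $X_i \sim \xi$, gives
\[
\Pr{\textstyle\sum_{j=1}^s |\cT_j| = N} \,\sim\, s\, \Pr{|\cT| = n}\, e^{-s^2/(2\sigma^2 n)}
\]
for $s = O(\sqrt{n})$ and $N = n - O(\sqrt{n})$. The contribution at height $H = x \sqrt{n}$ is then approximately $\prod_{v \in V} \pi_{k_v} \cdot x \sigma^2 e^{-x^2 \sigma^2/2}/\sqrt{n}$, and the Riemann sum over $H$ converges to $\prod_{v \in V} \pi_{k_v} \int_0^\infty x \sigma^2 e^{-x^2 \sigma^2/2}\, dx = \prod_{v \in V} \pi_{k_v}$, which matches the limit. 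In the infinite-variance subcase of type~I one replaces the Gaussian density by the appropriate stable density and $\sqrt{n}$ by $n^{1 - 1/\alpha}$.

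The main obstacle is the uniform control of the local limit theorem for $\Pr{\sum_{j=1}^s |\cT_j| = N}$ as both $N$ and $s$ grow (with $s$ of order $\sqrt{n}$ in the finite-variance case, respectively $n^{1-1/\alpha}$ in the stable subcase), which is required to justify the Riemann-sum passage, together with a tail estimate on $\he_{\cT_n}(v_0)$ to truncate the sum over $H$ outside a compact window of the rescaled height. Once these ingredients are in place, the integral identity above is the combinatorial echo of the Rayleigh-type (respectively stable-type) limit law for the rescaled height of a uniform vertex in $\cT_n$, and the proof collapses to matching the two explicit formulas for $\prod_{v \in V} \pi_{k_v}$.
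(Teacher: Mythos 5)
Your route---height decomposition, Kemperman's formula, a local limit theorem, and integration of a Rayleigh-type density---is genuinely different from the paper's proof of Theorem~\ref{te:case1}. The paper instead reduces the claim to a fringe-tree statement via a simple bijection: fixing a finite pointed fringe pattern $T^\bullet$ with pointed vertex at height $h$ and underlying unpointed tree $T$, for each realization of $\cT_n$ there is a bijection between vertices whose (unpointed) fringe subtree equals $T$ and vertices whose $h$th ancestor's pointed fringe equals $T^\bullet$, so that $\Pr{f^\bullet(\cT_n^\bullet, v_h) = T^\bullet} = \Pr{f(\cT_n, v_0) = T}$. It then invokes Janson's fringe-tree convergence (Theorem~7.12 in \cite{MR2908619}), which gives $\Pr{f(\cT_n, v_0) = T} \to \Pr{\cT = T}$ and rests on a law of large numbers for the balls-in-boxes model, not on any local limit theorem; finally it verifies $\Pr{\cT = T} = \Pr{f^\bullet(\cT^*, u_h) = T^\bullet}$ via the size-biasing identity $k\pi_k \cdot (1/k) = \pi_k$ (the same identity you use on the limit side). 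Since in type~I the tree $\cT^*$ is almost surely locally finite with an infinite spine, this pointwise convergence of finite cylinders upgrades to weak convergence in $\fmT^\bullet$. No height asymptotics are needed anywhere.

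The gap in your proposal is one of generality, and it is not cosmetic. Type~I requires only $\nu \ge 1$, hence $\mu = \Ex{\xi} = 1$; it imposes no further regularity on the tails of $\xi$. Your Riemann-sum passage hinges on a local limit theorem for $S_N = \xi_1 + \cdots + \xi_N$ holding uniformly over the relevant window, which you obtain under finite variance (Gaussian LCLT) or, as you propose, in the domain of attraction of an $\alpha$-stable law. But type~I admits critical offspring laws with infinite variance that lie in the domain of attraction of no stable law at all; for such weight sequences there is no scaling limit for $\he_{\cT_n}(v_0)$, no usable pointwise asymptotic for $\Pr{\sum_{j=1}^s |\cT_j| = N}$, and your integration over $H$ has nothing to converge to. Your argument is in fact much closer in spirit to the paper's proof of Theorem~\ref{te:case1a}, which does use the local limit theorem but explicitly restricts to finite variance. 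To obtain Theorem~\ref{te:case1} in full generality you must decouple the claim from the height of $v_0$ entirely, which is exactly what the fringe-tree bijection plus the law-of-large-numbers input accomplishes.
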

Let $T$ be a plane tree, $v \in T$ a vertex, and $k \ge 0$ an integer.
If the vertex $v$ has a $k$th ancestor $v_k$, then we may define the pointed plane tree $H_k(T,v)$ as the fringe tree $f(T,v_k)$ that is rooted at the vertex $v_k$ and pointed at the vertex $v$. Here we use the term vertex in the graph-theoretic sense, since the coordinates of the vertex $v$ as node of the Ulam--Harris tree depend on whether we talk about $v \in T$ or $v \in f(T,v_k)$. If the vertex $v$ has height $\he_T(v) < k$, we set $H_k(T,v)=\diamond$ for some placeholder symbol $\diamond$.

\begin{theorem}
	\label{te:case1a}
	 Suppose that weight-sequence has type I and the offspring distribution $\xi$ has finite variance.  Let $k_n$ be an arbitrary sequence of non-negative integers that satisfies $k_n/\sqrt{n} \to 0$. Then 
	 \[
		 d_{\textsc{TV}}(H_{k_n}(\cT_n,v_0), H_{k_n}(\cT^*, u_0)) \to 0
	 \]
	 as $n$ becomes large.
\end{theorem}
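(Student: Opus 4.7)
The plan is to derive explicit formulas for the laws of $H_{k_n}(\cT_n,v_0)$ and $H_{k_n}(\cT^*,u_0)$ on finite pointed trees and then compare them directly inside the sum defining the total variation distance. Let $t^\bullet$ be a finite pointed plane tree whose underlying tree $t$ has size $s$ and whose pointed vertex lies at depth exactly $k_n$. A direct computation on $\cT^*$, in which at each spine vertex $u_i$ the size-bias factor $c_i\pi_{c_i}=\Pr{\hxi=c_i}$ cancels with the uniform-position factor $1/c_i$ and the non-spine subtrees together with $f(\cT^*,u_0)$ are independent copies of $\cT$, yields
\[
\Pr{H_{k_n}(\cT^*,u_0)=t^\bullet}=\prod_{v\in t}\pi_{d^+_t(v)}=\Pr{\cT=t}.
\]
For the conditioned Galton--Watson tree I would write $\Pr{H_{k_n}(\cT_n,v_0)=t^\bullet}=n^{-1}\Exb{\#\{w\in\cT_n:f(\cT_n,w)=t\}}$ (each fringe occurrence of $t$ fixes a unique base point), and then apply the spine decomposition of a Galton--Watson tree, summing over the Ulam--Harris location of $w$. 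This leads to
\[
\Pr{H_{k_n}(\cT_n,v_0)=t^\bullet}=\Pr{\cT=t}\cdot\Psi_n(s),\qquad \Psi_n(s)=\frac{\sum_{h\ge 0}\Exb{\Pr{|\cF_{S_h}|=n-s-h\mid S_h}}}{n\,\Pr{|\cT|=n}},
\]
where $S_h=\sum_{i=1}^h(\hxi_i-1)$ is a random walk with positive drift $\sigma^2=\Va{\xi}$ and $|\cF_k|=\sum_{j=1}^k|\cT^{(j)}|$ is the total size of $k$ independent copies of $\cT$. Crucially, $\Psi_n(s)$ depends on $t^\bullet$ only through the size $s$.

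Summing the pointwise differences over all pointed trees and separately handling the event $\{H_{k_n}(\cT_n,v_0)=\diamond\}=\{\he_{\cT_n}(v_0)<k_n\}$ (noting that $H_{k_n}(\cT^*,u_0)\neq\diamond$ almost surely) produces the bound
\[
d_{\textsc{TV}}(H_{k_n}(\cT_n,v_0),H_{k_n}(\cT^*,u_0))\le\Pr{\he_{\cT_n}(v_0)<k_n}+\Exb{|\Psi_n(M_n^*)-1|},
\]
where $M_n^*:=|H_{k_n}(\cT^*,u_0)|$. The first summand equals $n^{-1}\Exb{\sum_{k<k_n}Z_k(\cT_n)}$, which is of order $k_n^2/n$ since $\Ex{Z_k(\cT_n)}\sim k\sigma^2$ for $k=o(\sqrt n)$, and hence vanishes by hypothesis. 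For the second summand, since $|\cT|$ lies in the domain of attraction of a one-sided $\tfrac{1}{2}$-stable law and the backward spine of $\cT^*$ up to level $k_n$ carries $\Theta(k_n)$ independent Galton--Watson attachments, a stable scaling argument gives $M_n^*=O_p(k_n^2)=o_p(n)$; it therefore suffices to prove that $\Psi_n(s)\to 1$ uniformly for $s=o(n)$. This should follow from the Otter--Dwass identity $\Pr{|\cF_k|=m}=(k/m)\Pr{\xi_1+\cdots+\xi_m=m-k}$ combined with the local central limit theorem $\Pr{\xi_1+\cdots+\xi_m=m-k}\sim(\sigma\sqrt{2\pi m})^{-1}e^{-k^2/(2m\sigma^2)}$ and the asymptotic $\Pr{|\cT|=n}\sim(\sigma\sqrt{2\pi})^{-1}n^{-3/2}$: approximating $S_h$ by its mean $h\sigma^2$ and replacing the resulting Riemann sum by $\sigma^2\int_0^\infty h\,e^{-h^2\sigma^2/(2n)}\,dh=n$ exactly matches the denominator of $\Psi_n(s)$, yielding the pointwise limit $\Psi_n(s)\to 1$.

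The principal obstacle is upgrading this heuristic to a genuine \emph{uniform} convergence $\Psi_n(s)\to 1$ over the relevant range of $s$. This calls for a uniform local central limit theorem for $\xi_1+\cdots+\xi_m$ (uniform in the deviation $k$ on the $\sqrt m$-scale), a quantitative law of large numbers or Berry--Esseen type bound controlling $S_h-h\sigma^2$, and a separate tail estimate handling the contribution of atypically large $h$ where the Gaussian factor $e^{-S_h^2/(2n\sigma^2)}$ suppresses the summand. The finite-variance hypothesis is precisely what produces the Gaussian behaviour on both scales and is what allows the threshold $k_n=o(\sqrt n)$ to be reached.
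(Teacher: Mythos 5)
Your overall structure matches the paper's: you reduce both distributions to unconditioned Galton--Watson probabilities via the fringe-occurrence bijection, you get the exact cancellation $\Pr{H_{k_n}(\cT^*,u_0)=t^\bullet}=\Pr{\cT=t}$, you isolate a ratio depending only on $|t|$, and you bound the $\diamond$ event separately. The significant difference is how you evaluate $\Pr{f(\cT_n,v_0)=t}$. You attempt a spine decomposition of the conditioned tree, summing over the height $h$ of the fringe root, which produces the double-layered object $\Psi_n(s)=\bigl(\sum_{h\ge 0}\Exb{\Pr{|\cF_{S_h}|=n-s-h\mid S_h}}\bigr)/(n\Pr{|\cT|=n})$ whose asymptotics require a uniform local CLT, concentration of $S_h$ around $h\sigma^2$, and a tail estimate -- the "principal obstacle" you flag. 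The paper sidesteps all of this by invoking Janson's exact identity (from the balls-in-boxes coupling and the cycle lemma, \cite[Rem.~15.8, Eq.~(17.1)]{MR2908619}), namely
\[
\frac{\Pr{f(\cT_n,v_0)=T}}{\Pr{\cT=T}}=\frac{\Pr{\overline{S}_{\,n-|T|}=0}}{\Pr{\overline{S}_n=-1}},
\]
where $\overline{S}_\ell$ is a sum of $\ell$ i.i.d.\ copies of $\xi-1$. This formula has in effect already performed your sum over $h$, so the uniformity over $|T|\le nt_n$ collapses to a single application of the lattice local CLT to two one-dimensional random walk probabilities. Your route is not wrong -- the heuristic Riemann-sum computation does produce the right constants -- but making $\Psi_n(s)\to 1$ rigorous and uniform is real work that the paper's choice of exact identity avoids. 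Two smaller points: the constant in $\Ex{Z_k(\cT_n)}\sim k\sigma^2$ needs justification (the order $\Theta(k)$ for $k=o(\sqrt n)$ is what you actually need), and the uniformity you would need is over $s\le$ the cutoff $nt_n=k_n\sqrt n$ used in i), not over all $s=o(n)$; the paper bundles both the size-of-$H_{k_n}(\cT^*,u_0)$ bound and the $\diamond$ bound into the single event $\cE_{k_n,n}$ and derives $\Pr{H_{k_n}(\cT_n,v_0)\in\cE_{k_n,n}}\to1$ from the other two claims rather than estimating it directly.
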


Here we use the redundant notation $(\cT^*, u_0)$ to emphasize that the tree $\cT^*$ is marked at the vertex $u_0$.

\subsection{Complete condensation in the  type II regime}
\label{sec:compl2}
If the weight-sequence $\mathbf{w}$ has type II or III, then we construct  $\cT^{*}$ similarly as in the type I case, letting $u_0$ become the root of an independent copy of the Galton--Watson tree $\cT$, and letting for $i=1, 2, \ldots$ the vertex $u_i$ receives offspring according to an independent copy $\hat{\xi}_i$ of $\hat{\xi}$, where a uniformly at random chosen son gets identified with~$u_{i-1}$ (specifying the number of siblings to the left and right of $u_{i-1}$)  and the rest become roots of independent copies of $\cT$. We proceed in this way for $i=1,2, \ldots$ \emph{until} it occurs for the first time $i_1$ that $\hat{\xi}_{i_1}=\infty$. When $\hat{\xi}_1, \ldots, \hat{\xi}_{i_1-1}< \infty$ and $\hat{\xi}_{i_1} = \infty$, then $u_{i_1}$ receives infinitely many offspring to the left and right of its son $u_{i_1-1}$. Each of these vertices (except $u_{i_1-1}$ of course) gets identified with an independent copy of the Galton--Watson tree $\cT$. We then proceed as before for $i = i_1, i_1 + 1, \ldots$, such that $u_i$ receives offspring according to an independent copy $\hat{\xi}_i$ of $\hat{\xi}$, with a random son being identified with $u_{i-1}$ and the rest becoming roots of independent copies of $\cT$, \emph{until} it happens for the second time $i_2$ that $\hat{\xi}_{i_2}=\infty$. When $\hat{\xi}_{i_1} = \infty = \hat{\xi}_{i_2}$ for $i_1 < i_2$ and $\hat{\xi}_i < \infty$ for all $i < i_2$ with $i \ne i_1$, then we stop the construction. The spine of the resulting tree is then given by the ordered path $u_0, \ldots, u_{i_2-1}$. Compare with Figure~\ref{fi:con}.

\begin{figure}[t]
	\centering
	\begin{minipage}{1.0\textwidth}
		\centering
		\includegraphics[width=0.28\textwidth]{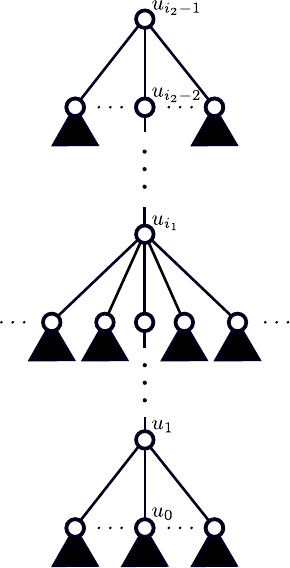}
		\caption{The limit tree $\cT^*$ in the complete condensation regime. The vertex $u_{i_1}$ is the only one having infinite degree, and each triangle represents an independent copy of the Galton--Watson tree $\cT$. }
		\label{fi:con}
	\end{minipage}
\end{figure}

\begin{theorem}
	\label{te:complete}
	Suppose that the weight-sequence $\mathbf{w}$ has type II. If the 
	maximum degree $\Delta(\cT_n)$ satisfies
		\[
		\Delta(\cT_n) = (1-\mu)n + o_p(n),
		\]
	then it holds that
	\[
	(\cT_n, v_0) \convdis \cT^{*} 
	\]
	in the space $\fmT^\bullet$. In particular, this is the case when there is a constant $\alpha>2$ and a slowly varying function $f$ such that for all $k$ \[\Pr{\xi=k} = f(k) k^{-\alpha}.\]
\end{theorem}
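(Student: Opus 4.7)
The plan is to exploit the two-step strategy the author sketches in the introduction. The first step is to establish an equivalence: given the already-known truncated convergence of the vicinity up to the first large-degree ancestor (valid for all type~II/III weights), weak convergence $(\cT_n,v_0)\convdis \cT^*$ in the compact space $\fmT^\bullet$ reduces to weak convergence of the heights $\he_{\cT_n}(v_0)\convdis\he_{\cT^*}(u_0)$. Compactness of $\fmT^\bullet$ makes every subsequence admit a weak limit; the truncated convergence fixes the law on the portion of the spine up to and including the first infinite-degree vertex $u_{i_1}$, and knowing the correct total height pins down where the spine must stop, so every subsequential limit must coincide with $\cT^*$. The second (and core) step is then to verify this height convergence under the hypothesis $\Delta(\cT_n)=(1-\mu)n+o_p(n)$.

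A direct unwinding of the definition of $\cT^*$ in the type~II case gives $\he_{\cT^*}(u_0)=i_2-1$, where $i_1<i_2$ are the first two indices with $\hat{\xi}_i=\infty$. Since each $\hat{\xi}_i$ is infinite with probability $1-\mu$ independently, both $i_1$ and $i_2-i_1$ are geometric on $\{1,2,\ldots\}$ with parameter $1-\mu$, so $\he_{\cT^*}(u_0)\eqdist 1+G_1+G_2$ with $G_1,G_2$ i.i.d.\ $\Geom{1-\mu}$ on $\{0,1,\ldots\}$. This is the target distribution for $\he_{\cT_n}(v_0)$.

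To reach it, I would first argue that the hypothesis forces, with probability $1-o(1)$, the existence of a unique vertex $w^\star\in\cT_n$ whose outdegree satisfies $d^+_{\cT_n}(w^\star)=(1-\mu)n+o_p(n)$; uniqueness is immediate because a second macroscopic-degree vertex would violate the concentration of $\Delta(\cT_n)$. Since the forest of bushes hanging off $w^\star$ has total size $n-o_p(n)$, the uniformly random vertex $v_0$ is, with probability $1-o(1)$, a strict descendant of $w^\star$, and on that event we decompose
\[
\he_{\cT_n}(v_0)=\he_{\cT_n}(w^\star)+1+D,
\]
where $D$ is the depth of $v_0$ within its bush. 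Conditionally on the condensation event, $v_0$ is asymptotically uniform over an essentially i.i.d.\ family of $(1-\mu)n+o_p(n)$ unconditioned subcritical $\xi$-Galton--Watson trees, and the classical identity $\Ex{\#\{v\in\cT:\he_\cT(v)=k\}}=\mu^k$ combined with $\Ex{|\cT|}=1/(1-\mu)$ yields $D\convdis G_1$. For the other summand, the subtree strictly above $w^\star$ is asymptotically described by a Kesten-type size-biased spinal construction truncated at the first size-biased offspring that itself becomes the condensation vertex, which gives $\he_{\cT_n}(w^\star)\convdis G_2$. Asymptotic independence of the two summands, which is what allows the limiting law to factorise into $1+G_1+G_2$, follows from the concentration of both the spinal length (bounded in probability) and the forest size (of order $(1-\mu)n+o_p(n)$) that is granted by complete condensation.

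The power-law statement is then immediate from Kortchemski's central limit theorem for $\Delta(\cT_n)$: under $\Pr{\xi=k}=f(k)k^{-\alpha}$ with $\alpha>2$ and $f$ slowly varying, $\Delta(\cT_n)-(1-\mu)n$ fluctuates on a subpolynomial scale, in particular $o_p(n)$, so the hypothesis applies. The main obstacle, in my view, is not the identification of the two geometric summands individually but the control of their asymptotic independence against the distortion introduced by the global conditioning $|\cT_n|=n$; complete condensation does most of the heavy lifting by forcing essentially deterministic sizes both above and below $w^\star$, which is precisely the input that lets the spinal law and the fringe law decouple in the limit.
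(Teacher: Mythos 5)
Your proposal is correct and follows essentially the same two-step strategy as the paper: first reduce, via the equivalence in Lemma~\ref{le:equ}, to weak convergence of $\he_{\cT_n}(v_0)$ toward $\he_{\cT^*}(u_0)\eqdist 1+G_1+G_2$; then split that height at the macroscopic-degree vertex $w^\star$ into two asymptotically geometric, asymptotically independent summands, exactly as the paper does by encoding the piece above $w^\star$ with Janson's modified Galton--Watson tree $\hat{\cT}_{1n}$ and the piece below with the conditioned forest of bushes. One small correction to the stated justifications: uniqueness of $w^\star$ does not follow from the concentration of the maximum degree alone (a second vertex of outdegree, say, $(1-\mu)n/2$ would not contradict $\Delta(\cT_n)=(1-\mu)n+o_p(n)$); it is the degree-sum concentration $\sum_{k>K_n}kN_k=(1-\mu)n+o_p(n)$ of \cite[Lem.~19.32]{MR2908619} that pins uniqueness down, and likewise the ``essentially i.i.d.''\ forest claim needs the local limit theorem estimate the paper provides (Equation~\eqref{eq:ff}) to handle the global conditioning on total forest size.
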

Here we make use of a  result by Kortchemski~\cite[Theorem 1]{MR3335012} who established a central limit theorem for the maximum degree, that ensures that $\Delta(\cT_n)$ has the correct order if the offspring distribution $\xi$ has a power law up to a slowly varying factor. There are also examples of offspring distributions with a more irregular behaviour. Janson~\cite[Example 19.37]{MR2908619} constructed a weight sequence such that along a subsequence $n = n_k$ it holds that $\Delta(\cT_{n}) = o_p(n)$, and along another subsequence several vertices with degree comparable to $n$ exist. This may be seen as incomplete condensation.

 The proof idea of Theorem~\ref{te:complete} is to deduce the asymptotic distribution of the height $\he_{\cT_n}(v_0)$ by localizing the vertex of $\cT_n$ having maximum degree at a position, that was also given in \cite[Theorem 2]{MR3335012} using results by Armend\'ariz and Loulakis~\cite{MR2775110} concerning
 conditioned random walks having a subexponential jump distribution. To do so, we employ results of Janson~\cite[Chapter 20]{MR2908619} that  (partially) use \[\Delta(\cT_n) = (1-\mu)n + o_p(n),\] but do not assume the offspring distribution to be subexponential. 
  The following main lemma, which characterizes convergence toward the tree $\cT^*$ in terms of weak convergence of the height $\he_{\cT_n}(v_0)$, then finalizes the proof of  Theorem~\ref{te:complete}.
\begin{lemma}
	\label{le:equ}
	If the weight-sequence $\mathbf{w}$ has type II or III, then the following three conditions are equivalent.
	\begin{enumerate}
		\item  $(\cT_n, v_0) \convdis \cT^{*}$ in $\fmT^\bullet$.
		\item $\he_{\cT_n}(v_0) \convdis \he_{\cT^{*}}(u_0)$.
		\item $\limsup_{n \to \infty} \Pr{\he_{\cT_n}(v_0) \ge k} \le \mu^k + k (1- \mu)\mu^{k-1}$ for all $k \ge 1$.
	\end{enumerate}
\end{lemma}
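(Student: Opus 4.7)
The implications $(1) \Rightarrow (2) \Rightarrow (3)$ are straightforward. For $(1) \Rightarrow (2)$, the tail event $\{T^\bullet \in \fmT^\bullet : \he_{T^\bullet}(u_0) \ge k\} = \{T^\bullet : d^+_{T^\bullet}(u_k) \ne *\}$ is clopen in $\fmT^\bullet$, since $\{*\}$ is an isolated clopen point of the disjoint union appearing in~\eqref{eq:second}; weak convergence therefore transfers to convergence of every tail probability of the height. For $(2) \Rightarrow (3)$, I compute $\Pr{\he_{\cT^*}(u_0) \ge k}$ directly: writing $\he_{\cT^*}(u_0) = i_2 - 1$ with $i_2$ the second index at which the i.i.d.\ sequence $(\hat{\xi}_i)_{i \ge 1}$ attains $\infty$, the event $\{i_2 \ge k+1\}$ coincides with that of at most one success among $k$ i.i.d.\ $\Bern{1-\mu}$ trials $\mathbf{1}\{\hat{\xi}_i = \infty\}$, whence $\Pr{\he_{\cT^*}(u_0) \ge k} = \mu^k + k(1-\mu)\mu^{k-1}$.

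The substantial direction is $(3) \Rightarrow (1)$. My plan is to use the compactness of $\fmT^\bullet$ to extract from any subsequence of $((\cT_n, v_0))_n$ a further weakly convergent sub-subsequence with some limit $T^\bullet_\infty$, and to show that $T^\bullet_\infty \eqdist \cT^*$ for every such limit; the full sequence must then converge. The essential technical input is a \emph{partial convergence} statement that I would establish separately via the modified depth-first-search exploration of $\cT_n$ alluded to in the introduction, and take for granted here: for every cylinder event $E \subset \fmT^\bullet$ specifying the outdegrees at finitely many vertices, with at most one specified spine outdegree equal to $(\infty,\infty)$---the only infinite outdegree that arises in limits, as uniform marking within an unbounded offspring set pushes both sibling counts to $\infty$---one has $\Pr{(\cT_n, v_0) \in E} \to \Pr{\cT^* \in E}$. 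Proving this partial convergence is the principal technical obstacle in the overall programme; the present lemma is in effect a reduction of the full local limit in $\fmT^\bullet$ to it, together with a one-dimensional control on the height of $v_0$.

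Granted the partial convergence, Portmanteau along the subsequence defining $T^\bullet_\infty$ transfers these limits: one obtains equality $\Pr{T^\bullet_\infty \in E} = \Pr{\cT^* \in E}$ on cylinders $E$ with only finite specified spine outdegrees (such $E$ is clopen) and the inequality $\Pr{T^\bullet_\infty \in E} \ge \Pr{\cT^* \in E}$ on cylinders with one specified spine outdegree equal to $(\infty,\infty)$ (such $E$ is closed). With $p_k^* := \mu^k + k(1-\mu)\mu^{k-1}$ and $H_k := \{\he(u_0) \ge k\}$, summing over the countable disjoint decomposition of $H_k \cap \{\text{at most one of } u_1, \ldots, u_k \text{ has infinite degree}\}$ into cylinders of those two types gives
\[
\Pr{T^\bullet_\infty \in H_k \cap \{\le 1 \text{ infinite}\}} \ge \Pr{\cT^* \in H_k \cap \{\le 1 \text{ infinite}\}} = \Pr{\cT^* \in H_k} = p_k^*,
\]
where the middle equality uses that $\cT^*$ is supported on trees with exactly one infinite-degree spine vertex $u_{i_1}$. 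Condition~$(3)$ furnishes $\Pr{T^\bullet_\infty \in H_k} \le p_k^*$, so all intermediate terms in the sandwich $p_k^* \le \Pr{T^\bullet_\infty \in H_k \cap \{\le 1 \text{ infinite}\}} \le \Pr{T^\bullet_\infty \in H_k} \le p_k^*$ are equal, forcing $\Pr{T^\bullet_\infty \in H_k \cap \{\ge 2 \text{ infinite}\}} = 0$ and upgrading each cylinder inequality above to equality. A configuration of $T^\bullet_\infty$ with two or more infinite-degree spine vertices at positions $j_1 < j_2$ lies in $H_{j_2} \cap \{\ge 2 \text{ infinite in } u_1, \ldots, u_{j_2}\}$; the countable union over $j_2$ has $T^\bullet_\infty$-measure zero, so $T^\bullet_\infty$ has at most one infinite-degree spine vertex almost surely. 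Hence $T^\bullet_\infty$ and $\cT^*$ share the same support, agree on all cylinder events (trivially on those with two or more specified infinite spine outdegrees, to which both assign zero mass), and coincide on the Borel $\sigma$-algebra by Dynkin's $\pi$-$\lambda$ lemma, yielding $T^\bullet_\infty \eqdist \cT^*$ as required.
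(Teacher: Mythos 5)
Your overall architecture matches the paper's: (1)$\Rightarrow$(2) via continuity of the height functional, (2)$\Rightarrow$(3) by computing the law of $\he_{\cT^*}(u_0)$, and the substantive direction by compactness, subsequence extraction, and a sandwich that pins the subsequential limit $T^\bullet_\infty$ to $\cT^*$. (The paper factors the hard part as (3)$\Rightarrow$(2)$\Rightarrow$(1) whereas you argue (3)$\Rightarrow$(1) directly; this is cosmetic.)

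There is, however, a genuine error in the formulation of your ``partial convergence'' input, and it is not merely a matter of phrasing: as written the claim is false, and the Portmanteau step that relies on it does not go through. If $E$ is a cylinder prescribing $d^+(u_j)=(\infty,\infty)$ at some spine vertex $u_j$, then $\Pr{(\cT_n,v_0)\in E}=0$ for \emph{every} $n$, since $\cT_n$ is a finite tree; so $\Pr{(\cT_n,v_0)\in E}$ converges to $0$, not to $\Pr{\cT^*\in E}>0$. What is actually true, and what the paper proves in Lemma~\ref{le:condens}, is an \emph{iterated} limit: writing $\cE_\ell$ for the cylinder in which $u_j$ has at least $\ell$ siblings of $u_{j-1}$ on each side and the other degrees are as in $E$, one has $\lim_{\ell\to\infty}\lim_{n\to\infty}\Pr{(\cT_n,v_0)\in\cE_\ell}=\Pr{\cT^*\in E}$. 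The correct Portmanteau step is then run on the sets $\cE_\ell$, which are \emph{clopen} because $\{\ell,\ell+1,\dots,\infty\}$ is clopen in $\overline{\ndN}_0$; this yields $\Pr{T^\bullet_\infty\in\cE_\ell}=\lim_n\Pr{(\cT_n,v_0)\in\cE_\ell}$ along the subsequence, and letting $\ell\to\infty$ with monotone continuity of probability on both sides produces the \emph{equality} $\Pr{T^\bullet_\infty\in E}=\Pr{\cT^*\in E}$ --- strictly stronger than the one-sided inequality you claim, and obtained without appealing to condition (3).

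A secondary gap lies in your $\pi$--$\lambda$ step. You decompose $H_k\cap\{\le 1\text{ infinite}\}$ into cylinders of your two declared types, but you never rule out \emph{partially} infinite spine degrees such as $d^+(u_j)=(\infty,5)$: these belong to neither type, so your decomposition of $H_k$ is incomplete and the conclusion ``$T^\bullet_\infty$ and $\cT^*$ share the same support'' is not yet justified. The paper handles this by summing its inequality over \emph{all} tuples $(d_1,\dots,d_j)\in(\overline{\ndN}_0\times\overline{\ndN}_0)^j$ (the $\cT^*$-probability vanishes for partially infinite or doubly infinite configurations, so the inequality holds trivially for those terms) and then using the height agreement to force all terms to coincide, thereby also killing the partially infinite possibilities. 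Similarly, one must check that non-spine vertices of $T^\bullet_\infty$ are a.s.\ of finite degree before the $\pi$--$\lambda$ closure step is legitimate; this follows by summing the finite-cylinder equalities over $d\in\ndN_0$, but should be said. These issues are fixable, but the formulation of the partial convergence in particular is precisely the technical point the lemma is meant to manage, so it needs to be stated correctly.
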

	Note that $\he_{\cT^{*}}(u_0)$ is distributed like $1$ plus the sum of two independent identically distributed geometric random variables that assume an integer $i$ with probability $\mu^i(1-\mu)$. 

\subsection{Complete condensation in the type III regime}
\label{sec:compl3}
If the weight-sequence $\mathbf{w}$ has type III, then it holds that $\mu=0$ and almost surely $\xi=0$ and $\hat{\xi} = \infty$. Here the Galton--Watson tree $\cT$ is always equal to a single point. Hence the tree $\cT^{*}$ is  obtained by letting $u_1$ have infinitely many offspring to the left and right of $u_0$, all of which (including $u_0$) are leaves. 

\begin{proposition}
	\label{pro:t3}
If the weight-sequence $\mathbf{w}$ has type III, then the following claims are equivalent.
\begin{enumerate}
	\item  $(\cT_n, v_0) \convdis \cT^{*}$ in $\fmT^\bullet$.
	\item $\he_{\cT_n}(v_0) \convp 1$.
	\item The maximum degree  $\Delta(\cT_n)$ satisfies $\Delta(\cT_n) = n + o_p(n)$.
\end{enumerate}
A general class of weight-sequences that demonstrate this behaviour is given by \[\omega_k = k!^{\alpha}\] with $\alpha>0$ a constant.
\end{proposition}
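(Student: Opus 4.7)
The plan is to chain all three conditions through the root outdegree $d_0 := d^+_{\cT_n}(o)$ and to invoke Lemma~\ref{le:equ} together with a cycle-lemma computation. First, since $\mathbf{w}$ is of type III we have $\mu = 0$, so the upper bound $\mu^k + k(1-\mu)\mu^{k-1}$ in condition~(3) of Lemma~\ref{le:equ} equals $1$ for $k=1$ and $0$ for all $k \geq 2$. Combined with the trivial fact $\Pr{\he_{\cT_n}(v_0) = 0} = 1/n \to 0$, Lemma~\ref{le:equ} therefore reduces in the type III regime to the equivalence between~(1) and $\Pr{\he_{\cT_n}(v_0) = 1} \to 1$, which is exactly claim~(2) of the Proposition. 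This establishes (1)~$\Leftrightarrow$~(2).

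For~(2)~$\Leftrightarrow$~(3), I would use the identity $\Pr{\he_{\cT_n}(v_0) = 1 \mid \cT_n} = d_0/n$, valid because $v_0$ is uniform in $\cT_n$. Since $d_0/n \in [0,1]$, dominated convergence shows that $\he_{\cT_n}(v_0) \convp 1$ is equivalent to $d_0/n \convp 1$. The implication $d_0/n \convp 1 \Rightarrow \Delta(\cT_n)/n \convp 1$ is immediate from $\Delta(\cT_n) \geq d_0$. Conversely, assuming~(3), note that whenever $\Delta(\cT_n) > (n-1)/2$, which occurs with probability tending to $1$, the maximum outdegree is attained by a \emph{unique} vertex, since two vertices of outdegree exceeding $(n-1)/2$ would force a total outdegree exceeding $n-1$. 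Conditionally on the outdegree multiset $(n_k)_{k \geq 0}$, the simply generated tree $\cT_n$ is uniformly distributed over the $(n-1)!/\prod_k n_k!$ plane trees realizing it, and a cycle-lemma computation then shows that the unique vertex of maximum outdegree $D$ sits at the root with conditional probability $D/(n-1)$. Under~(3) this quantity tends to $1$ in probability, so with probability tending to $1$ we have $d_0 = \Delta(\cT_n) = n + o_p(n)$, and hence $d_0/n \convp 1$.

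For the superexponential class $\omega_k = k!^\alpha$ with $\alpha > 0$, condition~(3) is established in Janson~\cite[Example~19.36]{MR2908619} and Janson--Jonsson--Stef\'ansson~\cite{MR2860856}, and the equivalence then yields $(\cT_n, v_0) \convdis \cT^*$. The main obstacle in the argument is the cycle-lemma step used in (3)~$\Rightarrow$~(2): one has to identify the conditional probability $D/(n-1)$ that the giant vertex occupies the root position, given a concentrated outdegree multiset. This is what prevents the high-degree vertex from being buried deep inside $\cT_n$; without it, condition~(3) on its own would be compatible with a giant at positive height, in which case the local limit around $v_0$ would look quite different from~$\cT^*$.
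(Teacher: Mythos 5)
Your proposal is correct and follows essentially the same route as the paper: Lemma~\ref{le:equ} with $\mu=0$ (so that $\he_{\cT^*}(u_0)=1$ almost surely) handles the equivalence of (1) and (2), and the equivalence of (2) and (3) is routed through the root outdegree $d^+_{\cT_n}(o)$. The only difference is that where the paper simply cites Janson's Equation~(20.2) for the fact that under (3) the maximum-degree vertex is with high probability the root, you prove this directly via the cycle-lemma computation (unique maximum once $\Delta(\cT_n)>(n-1)/2$, sitting at the root with conditional probability $\Delta(\cT_n)/(n-1)$ given the outdegree multiset), which is a correct self-contained substitute for that citation.
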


Here we have used that if $\omega_k = k!^\alpha$ with $\alpha>0$  a constant, then it is known \cite[Example 19.36]{MR2908619}, that the largest degree in $\cT_n$ has size $n + o_p(n)$.  But there are also other examples that exhibit a more irregular behaviour. In \cite[Example 19.38]{MR2908619} a weight-sequence is constructed such that along a subsequence $n=n_k$, for each $j \ge 1$ the $j$th largest degree $Y_{(j)}$ in  $\cT_{n_k}$ satisfies $Y_{(j)} = 2^{-j}$ with high probability. This may be seen as incomplete condensation. It is clear that in this case the limit of $(\cT_n, v_0)$, if it exists at all, must have a different shape than $\cT^{*}$. 

\subsection{Large nodes and truncated limits}
\label{sec:approx}
Suppose that the weight sequences $\mathbf{w}$ has type II or type III.
The limit theorems in Subsections~\ref{sec:compl2} and \ref{sec:compl3} work in settings of complete condensation, where the maximum degree of the tree $\cT_n$ satisfies
\[
	\Delta(\cT_n) = (1-\mu)n + o_p(n).
\]
If we content ourselves with the vicinity of the vertex $v_0$ up to and including the first vertex having large degree, we may obtain a limit theorem in complete generality. We are also going to construct a coupling to demonstrate how the vertex with infinite degree in the limit corresponds to a vertex with large degree in the simply generated tree $\cT_n$.

Janson~\cite[Lemma 19.32]{MR2908619} showed that there is a deterministic sequence $\Omega_n$ that tends to infinity sufficiently slowly, such that for any sequence $K_n \to \infty$ with $K_n \le \Omega_n$ it holds that 
the numbers $N_k$ of vertices with outdegree $k$ in the tree $\cT_n$ satisfy
\begin{align}
	\label{eq:om}
	\sum_{k \le K_n} k N_k = \mu n + o_p(n) \qquad \text{and} \qquad \sum_{k > K_n} k N_k = (1-\mu)n + o_p(n)
\end{align}
The sequence $\Omega_n$ may be replaced by any sequence that tends to infinity more slowly. Hence we may assume without loss of generality that $\Omega_n$ additionally satisfies
\begin{align}
	\label{eq:add}
	\Omega_n = o(n).
\end{align}
Let $\tilde{D}_n$ denote a random positive integer, that is independent from all previously considered random variables, with distribution given by 
\begin{align}
	\label{eq:dn}
	\tilde{D}_n \eqdist (d^+_{\cT_n}(o) \mid d^+_{\cT_n}(o) > \Omega_n).
\end{align}
Here we let $o$ denote the root-vertex of $\cT_n$. That is, $\tilde{D}_n$ is distributed like the root-degree conditioned to be "large".
We form the random tree $\bar{\cT}^*_{n}$ in a similar manner as the random tree $\cT^*$. The vertex $u_0$ becomes the root of an independent copy of the Galton--Watson tree $\cT$. For $i=1, 2, \ldots$ the vertex $u_i$ receives offspring according to independent copy $\hat{\xi}_i$ of $\hat{\xi}$, where a randomly chosen son gets identified with $u_{i-1}$ and the rest become roots of independent copies of $\cT$. We proceed in this way for $i=1,2, \ldots$ until it occurs that  $\hat{\xi}_{i}=\infty$. When $\hat{\xi}_1, \ldots, \hat{\xi}_{i-1}< \infty$ and $\hat{\xi}_{i} = \infty$, then $u_{i}$ receives $\tilde{D}_n$ offspring vertices, such that a uniformly at random chosen one gets identified with $u_{i-1}$, and the rest get identified with the roots of independent copies of $\cT$. Rather than continuing with the spine as in the construction of the tree $\cT^*$, we stop at this point, so that $u_{i}$ becomes the root of this tree. 

Given a pointed tree $T^\bullet = (T,v)$ and an ancestor $a$ of $v$, we let $f^\bullet(T^\bullet,a)$ denote the fringe subtree of $T$ at $a$ that we consider as pointed at the vertex $v$. We refer to $f^\bullet(T^\bullet,a)$ as the \emph{pointed fringe subtree} of the pointed tree $T^\bullet$ at the vertex $a$.

Let $v_0$ denote a uniformly at random selected vertex of the simply generated tree $\cT_n$. Let $H(\cT_n, v_0, \Omega_n)$ denote the pointed fringe subtree of $(\cT_n, v_0)$ at the youngest ancestor of $v_0$ that has outdegree bigger than $\Omega_n$. If no such vertex exists (which is unlikely to happen, as we are going to verify), set $H(\cT_n,v_0,\Omega_n)=\diamond$ for some fixed placeholder value $\diamond$. 

\begin{theorem}
	\label{te:trunc}
	Suppose that the weight sequence $\mathbf{w}$ has type II or III.
	Let $\bar{\cT}^*$ denote the pointed fringe subtree of the tree $\cT^*$ at its unique vertex with infinite degree.
	Then it holds that
	\[
	H(\cT_n,v_0,\Omega_n) \convdis \bar{\cT}^*.
	\]
	in the space $\fmT^\bullet$.
\end{theorem}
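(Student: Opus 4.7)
The plan is to prove convergence of $H(\cT_n, v_0, \Omega_n)$ to $\bar{\cT}^*$ in the compact Polish space $\fmT^\bullet$ by (a)~coupling $H(\cT_n, v_0, \Omega_n)$ with the auxiliary tree $\bar{\cT}^*_n$ from Section~\ref{sec:approx} so that they agree on any finite window of outdegree coordinates with probability tending to one, and (b)~noting that $\bar{\cT}^*_n \convdis \bar{\cT}^*$. Claim~(b) is essentially immediate from the constructions: $\bar{\cT}^*_n$ and $\bar{\cT}^*$ share the same geometrically distributed random spine length $i$, the same i.i.d.\ finite $\hat\xi$-outdegrees along the spine, and the same i.i.d.\ copies of $\cT$ as fringe subtrees at non-spine offspring; the only discrepancy is the root outdegree, which is $\tilde D_n > \Omega_n$ for $\bar{\cT}^*_n$ and $\infty$ for $\bar{\cT}^*$. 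Since $\Omega_n \to \infty$ forces $\tilde D_n \convp \infty$ in the one-point compactification topology on $\overline{\ndN}_0$, part (b) follows. Because the topology of $\fmT^\bullet$ is generated by the outdegree coordinates at vertices of $\VHT^\bullet$, claim~(a) reduces to matching the outdegrees on any fixed finite $W \subset \VHT^\bullet$ with probability $\to 1$. As a preliminary step, $v_0$ must have some ancestor of outdegree $> \Omega_n$ with high probability: by~\eqref{eq:om}, a fraction $1-\mu+o_p(1) > 0$ of non-root vertices of $\cT_n$ have a parent of outdegree $> \Omega_n$, and iterating this estimate along the ancestor chain shows that failure has asymptotically vanishing probability.

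The coupling in~(a) is constructed via the modified depth-first-search alluded to in the introduction, exploring $\cT_n$ upward from $v_0$ one ancestor at a time and revealing at step $i$ the outdegree $d^+(v_i)$, the position $j_i$ of $v_{i-1}$ among $v_i$'s offspring, and the fringe subtrees at the non-spine offspring of $v_i$. The key asymptotic input is Janson's Lemma~19.32 (equations~\eqref{eq:om}): for each fixed $k$ it holds that $N_k/n \convp \pi_k$ (with $N_k$ the number of degree-$k$ vertices in $\cT_n$) and $\sum_{k \le \Omega_n} k N_k = \mu n + o_p(n)$. Consequently, for $v_0$ uniform, the probability that its parent $v_1$ has a fixed finite outdegree $d_1$ is $d_1 N_{d_1}/n \to d_1 \pi_{d_1} = \Pr{\hat\xi = d_1}$, the probability $d^+(v_1) > \Omega_n$ converges to $1 - \mu$, and conditional on $d^+(v_1) = d_1$ the position $j_1$ of $v_0$ is uniform on $\{1, \ldots, d_1\}$. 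By Janson's fringe-subtree convergence theorem for simply generated trees (\cite[Chapter~7]{MR2908619}), the $d_1 - 1$ non-spine offspring fringes at $v_1$ converge jointly to i.i.d.\ copies of $\cT$. Iterating this step along the ancestor chain and verifying conditional independence in the limit (which holds because conditioning on a finite amount of local information leaves the estimates~\eqref{eq:om} asymptotically intact), I obtain a coupling that matches the spine of $\bar{\cT}^*_n$ up to the ancestor just below the first large one.

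The main obstacle is the joint analysis at the first large-degree ancestor $v_J$, where I must show that conditional on the already-revealed spine below, the triple $(d^+(v_J), \text{position of } v_{J-1}, \text{fringe subtrees at the other offspring of } v_J)$ agrees in distribution, in the $n \to \infty$ limit, with $(\tilde D_n, \text{uniform position}, \text{i.i.d.\ copies of }\cT)$ as prescribed in the definition of $\bar{\cT}^*_n$. Uniformity of $v_{J-1}$'s position on $\{1, \ldots, d^+(v_J)\}$ follows by the symmetry of the offspring slots of $v_J$ once one conditions on which slot continues the spine. The degree identification uses the combinatorial observation that, upon conditioning on $v_0$ being uniform and all strict descendants of $v_J$ on the spine being small, the ancestor $v_J$ is size-biased over large-degree vertices of $\cT_n$ in a manner that, after a rearrangement via the cyclic lemma applied to the \L ukasiewicz encoding, coincides asymptotically with the distribution~\eqref{eq:dn} of the root outdegree $\tilde D_n$ conditioned to exceed $\Omega_n$. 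Finally, the asymptotic i.i.d.\ $\cT$ structure of the fringes at the non-spine offspring of $v_J$ follows by extending Janson's single-vertex fringe convergence to a fixed window of siblings under a common large-degree parent, again using~\eqref{eq:om} to control the local structure. Combining the spine analysis with this identification at $v_J$ yields the coupling claimed in~(a), and together with~(b) completes the proof.
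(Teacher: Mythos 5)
Your overall architecture --- first couple $H(\cT_n,v_0,\Omega_n)$ with the auxiliary tree $\bar{\cT}^*_n$ on finite windows of outdegree coordinates, then use $\tilde{D}_n\convp\infty$ to pass to $\bar{\cT}^*$ --- is viable, but it amounts to proving the stronger Theorem~\ref{te:approx} first and deducing Theorem~\ref{te:trunc} from it. The paper proves Theorem~\ref{te:trunc} directly: via the balls-in-boxes representation \eqref{eq:balls} and a modified depth-first search it computes the exact probability that the pointed fringe subtree at the $k$th ancestor has a prescribed finite shape whose root degree is $\ge\Omega_n$, sums over \emph{all} such large degrees at once, and uses \eqref{eq:om} together with $N_j/n\convp\Pr{\xi=j}$ and dominated convergence to get the limit $(1-\mu)\prod_i\Pr{\xi=\bar d_i}$, which matches $\bar{\cT}^*$. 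This directness matters: by integrating over all degrees $>\Omega_n$ simultaneously, the paper never has to control the structure of the tree \emph{conditionally on the exact (possibly macroscopic) degree} of the large ancestor.

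That conditional control is exactly where your step (a) has a genuine gap. Your key claim at the first large ancestor $v_J$ --- that, given the revealed spine below it, the triple (degree of $v_J$, position of $v_{J-1}$, fringe subtrees at the other offspring of $v_J$) is asymptotically $(\tilde{D}_n,\ \text{uniform},\ \text{i.i.d.\ copies of }\cT)$ --- is asserted via ``size-biasing plus the cyclic lemma'' and ``extending Janson's single-vertex fringe convergence using \eqref{eq:om}'', but neither \eqref{eq:om} nor the fringe theorem of \cite{MR2908619} yields it. In balls-in-boxes terms you must show that $\Pr{(Y_1,\dots,Y_\ell)=(\bar d_1,\dots,\bar d_\ell)\mid Y_0=K}\to\prod_i\Pr{\xi=\bar d_i}$ \emph{uniformly} over $\Omega_n<K\le(1-\nu+o(1))n$: conditioning on one box holding a macroscopic number $K$ of balls turns the remaining boxes into a $(n-1-K,n-1)$ model, and the paper needs Janson's uniform estimate \cite[Thm.\ 11.6]{MR2908619} together with Lemma~\ref{le:bound} (to keep $K$ bounded away from $n$) to handle this; your remark that ``conditioning on a finite amount of local information leaves \eqref{eq:om} asymptotically intact'' does not apply, since here the conditioning is on a macroscopic quantity. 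Similarly, your preliminary claim that $v_0$ has a large ancestor with high probability ``by iterating'' the $1-\mu$ estimate requires the conditional version of that estimate along the spine, which is what the exchangeability computations behind Lemma~\ref{le:condens} supply, not \eqref{eq:om} alone. If your goal is only Theorem~\ref{te:trunc}, drop the identification with $\tilde{D}_n$ and sum over all large degrees as the paper does; if you want the coupling route, you must supply the uniform-in-$K$ conditional estimates (and, in step (b), note that it is the uniformly random position of the spine child among the $\tilde{D}_n$ offspring that makes both the left and right sibling counts diverge).
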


The strength of this theorem is its generality, as we make no additional assumptions on the weight-sequence at all.  It is suitable for applications where it is not necessary to look behind the large vertex. 

We may still improve upon this.  For each $n$, let $\cT_n^*$ be constructed from $\bar{\cT}^*$ by pruning  at its root vertex such that its outdegree becomes $\tilde{D}_n$. Of course we have to select one of the $\tilde{D}_n$ ways of how much we prune from the left and right so that the total outdegree becomes $\tilde{D}_n$, and we choose an option uniformly at random.

For each integer $m \ge 0$ we let $\bar{V}^{[m]} \subset \VHT^\bullet$ denote the vertex set of the tree obtained from $\UHT^\bullet$ by deleting all vertices with distance larger than $m$ from the center vertex $u_0$ and pruning so that the vertices $u_i$, $1 \le i \le m$ have outdegree $(m,m)$ and the remaining vertices all have outdegree  equal to $m$.  The topology on the subspace $\fmT^\bullet_{\mathrm{lf}} \subset \fmT^\bullet$  of locally finite trees is induced by the metric
\[
	d_{\fmT^\bullet_{\mathrm{lf}}}(T_1^\bullet,T_2^\bullet) = 1 / \sup\{m \ge 0 \mid d_{T_1^\bullet}^+(v) = d_{T_2^\bullet}^+(v) \text{ for all $v \in \bar{V}^{[m]}$} \}.
\]
This can be verified using the fact that a sequence $(T_n)_n$ in $\fmT^\bullet$ converges towards an element $T\in \fmT^\bullet$ if and only if $d_{T_n}^+(v)$ converges towards $d_{T}^+(v)$ for each $v \in \VHT^\bullet$.

\begin{theorem}
	\label{te:approx}
	Suppose that the weight sequence $\mathbf{w}$ has type II or III.
	For any finite set of vertices $x_1, \ldots, x_r \in \VHT^\bullet$ it holds that
	\[
		d_{\textsc{TV}}( (d_{H(\cT_n,v_0,\Omega_n)}^+(x_i))_{1 \le i \le r}, (d^+_{\bar{\cT}^*_n}(x_i))_{1 \le i \le r}) \to 0.
	\]
	Equivalently, there is a coupling of $(\cT_n, v_0)$ and $\bar{\cT}^*_n$ such that
	\[
		d_{\fmT^\bullet_{\mathrm{lf}}}( H(\cT_n,v_0,\Omega_n), \bar{\cT}^*_n) \convp 0.
	\]
\end{theorem}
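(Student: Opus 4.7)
The plan is to strengthen Theorem~\ref{te:trunc}, which already gives weak convergence of $H(\cT_n,v_0,\Omega_n)$ toward $\bar{\cT}^*$ in the compact space $\fmT^\bullet$, to total-variation convergence for the outdegree vector $(d^+_{H(\cT_n,v_0,\Omega_n)}(x_i))_{1 \le i \le r}$ on any fixed finite set of vertices $x_1, \ldots, x_r \in \VHT^\bullet$, with the limiting object now being $\bar{\cT}^*_n$ rather than $\bar{\cT}^*$. The two trees differ only in that $\bar{\cT}^*_n$ truncates the unique infinite-degree root to outdegree $\tilde{D}_n$. The equivalence with the coupling statement is then a standard consequence of finite-dimensional total-variation convergence on the countable index set $\VHT^\bullet$: with cutoffs $m(n) \to \infty$ sufficiently slowly, a diagonal argument yields a coupling under which the outdegrees agree on $\bar{V}^{[m(n)]}$ with probability $1 - o(1)$, which is exactly $d_{\fmT^\bullet_{\mathrm{lf}}}(\cdot, \cdot) \convp 0$.

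First I would revisit the modified depth-first-search exploration underlying the proof of Theorem~\ref{te:trunc}. Walking upward along the ancestors $v_0, v_1, \ldots$ of the uniform random vertex $v_0$, the exploration reveals at each step the outdegree $d^+_{\cT_n}(v_i)$ and the position of $v_{i-1}$ among the children of $v_i$. Let $A = v_h$ be the first ancestor with outdegree strictly greater than $\Omega_n$; by Theorem~\ref{te:trunc} such an $A$ exists with probability $1 - o(1)$. The content of the first main step is that, for the finitely many spine vertices strictly below $A$, the joint law of the outdegrees, the branch positions of the $v_{i-1}$, and the sibling fringe subtrees restricted to any bounded depth, is approximated in total variation by independent draws of $\hat{\xi}$ conditioned to be finite (with uniform branch positions) together with independent copies of the Galton--Watson tree $\cT$. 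In type~II this is a classical size-biased Galton--Watson decomposition of the conditioned tree; in type~III one uses Janson's modified skeleton decomposition~\cite[Chapter~15]{MR2908619} together with~\eqref{eq:om}.

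Next I would control the outdegree of the large ancestor. Since $v_0$ is uniform in $\cT_n$, the conditional distribution of $d^+_{\cT_n}(A)$ is the size-biased distribution of $\{N_k : k > \Omega_n\}$, where $N_k$ counts vertices of $\cT_n$ with outdegree~$k$. On the other hand, $\tilde{D}_n$ is, by~\eqref{eq:dn}, the law of $d^+_{\cT_n}(o)$ conditional on exceeding $\Omega_n$. By~\eqref{eq:om}, the mass of large outdegrees in $\cT_n$ concentrates on essentially one vertex, so the size-biasing becomes asymptotically trivial and the two laws agree up to $o(1)$ in total variation. Conditional on $d^+_{\cT_n}(A)$, the index of $v_{h-1}$ among the children of $A$ is uniform by exchangeability, and the remaining siblings of $v_{h-1}$ are, restricted to any bounded depth, approximately iid copies of~$\cT$. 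Since the outdegree vector at the fixed finite set $x_1, \ldots, x_r$ depends only on the tree structure at some bounded depth, combining these two steps yields the required finite-dimensional total-variation convergence.

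The main obstacle is the uniformity of the total-variation bounds in the first step, especially in type~III where no direct Galton--Watson representation of $\cT_n$ is available. One must work with Janson's skeleton decomposition and exploit that~\eqref{eq:om} together with~\eqref{eq:add} is sharp enough to yield an $o(1)$ error in total variation (and not merely weak convergence) on bounded-depth events, uniformly in the conditioning on the spine structure near~$v_0$. Once this uniform control is in place, the remainder is a routine bounded-depth combinatorial comparison and the diagonal coupling argument outlined above.
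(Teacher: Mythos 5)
Your proposal identifies the right structural ingredients (the modified depth-first search, the relation between the outdegree of the first large ancestor and $\tilde{D}_n$, the reduction to total-variation control on bounded-depth events), and correctly flags the uniformity over the value of that large outdegree as the crux. But two of your intermediate claims are not right, and the key technical tool that resolves the uniformity issue is missing.

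First, the claim that ``the mass of large outdegrees in $\cT_n$ concentrates on essentially one vertex'' because of~\eqref{eq:om}, so that the size-biasing ``becomes asymptotically trivial,'' is false in general. Equation~\eqref{eq:om} only controls the \emph{total} mass $\sum_{k>K_n} kN_k$; it says nothing about this mass sitting in a single vertex. The paper explicitly discusses examples of incomplete condensation (e.g.\ \cite[Examples 19.37, 19.38]{MR2908619}) where several vertices have degree comparable to $n$, and Theorem~\ref{te:approx} is proved without any completeness assumption. What actually happens is algebraic, not a concentration phenomenon: writing the relevant probabilities via the balls-in-boxes model $(Y_0,\dots,Y_{n-1})$ and the modified DFS, the size-biasing factor $K$ appears both in $\Pr{\tilde D_n = K}$ (via $\Pr{d^+_{\cT_n}(o)=k}=\tfrac{nk}{n-1}\Pr{Y_0=k}$) and in the exploration probability, and they cancel exactly; the ratio $K\Pr{Y_0=K}/((1-\mu)\Pr{\tilde D_n=K})$ does not depend on $K$ at all.

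Second, and more importantly, what remains to be shown is that
\[
\Pr{(Y_1,\dots,Y_\ell)=(\bar d_1,\dots,\bar d_\ell)\mid Y_0=K} \Big/ \prod_{i=1}^\ell \Pr{\xi=\bar d_i}\to 1
\]
\emph{uniformly} over $\Omega_n\le K\le (1-\nu+t_n)n$. Your sketch gestures at ``Janson's skeleton decomposition'' and says ``one must exploit that~\eqref{eq:om} together with~\eqref{eq:add} is sharp enough,'' but this does not produce the uniform estimate. Conditioning on $Y_0=K$ replaces $(Y_1,\dots,Y_{n-1})$ by the balls-in-boxes model $Y^{(n-1-K,n-1)}$, so the empirical counts $N_d^{(m,n)}/n$ with $m=n-1-K$ range over all $m/n$ close to $\mu$, and what is needed is a law of large numbers for these counts that is uniform in $m$. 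This is precisely \cite[Theorem 11.6]{MR2908619}: the continuity of the function $\tau(x)=\sup\{t\le\rho:\psi(t)\le x\}$ together with the uniform convergence $N_d^{(m,n)}/n \to \omega_d\tau(m/n)^d/\phi(\tau(m/n))$ in probability. Without this input (or an equivalent), the proof does not close, and the distinction you draw between type~II (``classical size-biased GW decomposition'') and type~III is a red herring: the size-biased decomposition of a conditioned GW tree describes the neighbourhood of the fixed root $o$, not the conditional law of the environment seen from a random $v_0$ after conditioning on the degree of its first large ancestor, and in any case the balls-in-boxes argument treats both types uniformly.
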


In Equation (20.4) and the subsequent paragraph of \cite{MR2908619}, Janson also argues that if 
\[
\Delta(\cT_n) = (1 - \mu)n + o_p(n),
\]
then
\[
d_{\textsc{TV}}(\Delta(\cT_n), \tilde{D}_n) \to 0.
\]
Hence in the complete condensation regime, we may choose $\tilde{D}_n$ in the coupling of Theorem~\ref{te:approx}  such that $\tilde{D}_n = \Delta(\cT_n)$ with probability tending to $1$ as $n$ becomes large. This yields the asymptotic location of the vertex with maximum degree with respect to the random vertex $v_0$.

In a way, we could say that the ancestor of the random vertex $v_0$ with large degree in $\cT_n$ blocks the view, if we are not in the complete condensation regime. As we shall see in Lemma~\ref{le:general} and Lemma~\ref{le:condens} it does not block it  completely though. Roughly  said, for each fixed $k \ge 1$ we obtain the asymptotic probability for the event, that the pointed fringe subtree  at the $k$th ancestor of the random vertex $v_0$ has a given shape that involves at most one large vertex on the spine.  This yields more information than Theorem~\ref{te:trunc} on how a limit tree $\bar{\cT}$ must look, if $(\cT_n, v_0)$ converges weakly (along a subsequence). (Note that the compactness of the space $\fmT^\bullet$ guarantees the existence of such subsequences.) It also  suggests that if the center vertex of $\bar{\cT}$ has almost surely at most one ancestor with infinite degree, then it must already hold that $\bar{\cT} \eqdist \cT^*$, but we leave it to the inclined reader to pursue this line of thought further.

\section{Proof of the main results}
\label{sec:proofs}

\subsection{Preliminaries}

\subsubsection{Simply generated trees and balls in boxes}
\label{sec:preli}
For any integers $m, n \ge 1$ we may consider the balls-in-boxed model $(Y_i^{(m,n)})_{0 \le i <n}$ that randomly draws a vector vector  $(y_0, \ldots, y_{n-1})$ of non-negative integers satisfying \[\sum_{i=0}^{n-1} y_i = m\] with probability proportional to $\omega_{y_0} \cdots \omega_{y_{n-1}}$. To shorten notation, we set
\[
	Y_i = Y_i^{(n-1,n)}
\]
for all $i$, as this will be the case that we will consider most of the time. This model is related to the outdegree sequence $(d_0, \ldots, d_{n-1})$ of the simply generated tree $\cT_n$ by
\begin{align}
\label{eq:balls}
(d_0, \ldots, d_{n-1}) \eqdist ((Y_0, \ldots, Y_{n-1}) \mid \sum_{i=0}^\ell (Y_i -1) \ge 0 \text{ for all $0 \le \ell < n-1$} ).
\end{align}
Here we may form the outdegree sequence according to depth-first-search order, but many other form of vertex explorations are possible. In general, consider the following family of algorithms, that order the vertices of finite deterministic plane trees.
\begin{enumerate}
	\item Take a plane tree $T$ as input.
	\item Let $\cP$ denote the ordered list of visited vertices, that initially is empty. Let $\cQ$ denote the ordered queue of vertices that are scheduled to be visited next, that we initialize with the root of $T$.
	\item Move the first vertex $v$ of the ordered queue $\cQ$ to the end of the list $\cP$ of visited vertices. We then modify the queue $\cQ$ of vertices that are scheduled to be visited next  so that it additionally contains all the sons of the vertex $v$ in the tree $T$. We do so by a fixed rule, that may take into account the current state of $\cQ$ and $\cP$, and need not respect the previous order of vertices in $\cQ$ or the order of the offspring vertices of $v$.
	\item We repeat the third step until the queue $\cQ$ of scheduled vertices is empty.
\end{enumerate}
If we order the vertices of the random tree $\cT_n$ according to an algorithm of this form, then the corresponding sequence of outdegrees satisfies Equation~\eqref{eq:balls}. This degree of freedom will be crucial in our analysis of extended fringe subtrees of $\cT_n$ in the condensation regime.

A classical combinatorial result (see for example \cite[Cor 15.4]{MR2908619}) states that for each  vector $(y_i)_{0 \le i \le n-1}$ of numbers $y_i \ge -1$ with 
$
\sum_{i=0}^{n-1} y_i = -1 
$ there is a unique cyclic shift 
\begin{align}
\label{eq:shift}
(z_0, \ldots, z_{n-1}) = (y_{j \mod n},\, y_{j+1 \mod n},\, \ldots,\, y_{j + n -1 \mod n})
\end{align} by $0 \le j \le n-1$, such that for all $0 \le \ell < n-1$ it holds that $\sum_{i=0}^\ell (z_i -1) \ge 0.$ 
Thus there is a coupling of the simply generated tree $\cT_n$ with the balls-in-boxes model $(Y_0, \ldots, Y_{n-1})$ such that the degree sequence $(d_0, \ldots, d_{n-1})$ of $\cT_n$ is a random cyclic shift of  $(Y_0, \ldots, Y_{n-1})$.

\subsubsection{Large nodes near the root in the condensation regime}
\label{sec:large}
Suppose that the weight-sequence $\mathbf{w}$ has type II or type III.
Let $\xi$ denote the offspring distribution defined in Section~\ref{sec:simply} and $\mu = \Ex{\xi} \in [0, 1[$ its first moment. Let $\tilde{D}_n$ be the random non-negative integer defined in Equation~\eqref{eq:dn}. Consider the random variable $\tilde{\xi}$ defined by
\[
	\Pr{\tilde{\xi} = k} = k \Pr{\xi=k}
\]
for $k \in \ndN$, and 
\[
\Pr{\tilde{\xi} = \diamond} = 1- \mu
\]
for some placeholder $\diamond$. Janson~\cite[Chapter 20]{MR2908619} defined the following modified Galton--Watson tree $\hat{\cT}_{1n}$. There are normal and special vertices, and we start with a special root. Each normal vertex receives offspring according to an independent copy of $\xi$, all its sons are also normal. For any special vertex we consider an independent copy of $\tilde{\xi}$. If it is a finite number, then we add accordingly many offspring and declare a uniformly at random selected son as special. If it assumes the placeholder value $\diamond$, then we add offspring according to $\tilde{D}_n$, all of which are normal. Thus the tree $\hat{\cT}_{1n}$ comes with an almost surely finite spine whose tip $v^*$ satisfies
\[
	\Pr{\he_{\hat{\cT}_{1n}}(v^*) = k} = \mu^k (1-\mu)
\]
for all $k \ge 0$. Janson~\cite[Theorem 20.2]{MR2908619} showed that for any finite list of vertices $v_1, \ldots, v_\ell \in \VHT$ it holds that
\begin{align}
\label{eq:tvconv}
	d_{\textsc{TV}}( (d^+_{\cT_n}(v_i))_{1 \le i \le \ell}, (d^+_{\hat{\cT}_{1n}}(v_i))_{1 \le i \le \ell}) \to 0
\end{align}
as $n$ becomes large. 

For each integer $m \ge 0$ we let $V^{[m]} \subset \VHT$ denote the vertex set of the tree obtained from $\UHT$ by truncating at height $m$ and pruned so that all out-degrees are equal to $m$. That is, $V^{[m]}$ corresponds to all sequences of non-negative integers with length at most $m$ such that each element of the sequences less than or equal to $m$. The topology on the space of locally finite plane trees is induced by the  metric
\[
\delta_1(T,T') := 1 / \sup\{m \ge 1 \mid (d_T^+(v))_{v \in V^{[m]}} = (d_{T'}^+(v))_{v \in V^{[m]}} \}.
\]
The convergence~\eqref{eq:tvconv} is equivalent to the existence of a coupling of the random trees $\cT_n$ and $\hat{\cT}_{1n}$ such that
\begin{align}
\label{eq:couple}
	\delta_1( \cT_n,  \hat{\cT}_{1n}) \convp 0.
\end{align}

\subsection{Convergence in the  type I regime}

We present a proof of Theorem~\ref{te:case1a}, in which we make use of properties that are characteristic of the type I regime, where extended fringe subtrees are typically small and random vertices have typically large height. Theorem~\ref{te:case1}, which states convergence of $(\cT_n, v_0)$ toward $\cT^*$ if $\mathbf{w}$ has type I, follows directly from a general observation, given in  Lemma~\ref{le:general} below, that is valid for weight sequences having arbitrary type.

\begin{proof}[Proof of Theorem~\ref{te:case1a}]
Suppose that the weight sequence $\mathbf{w}$ has type I, and that the offspring distribution $\xi$ has finite variance $\sigma^2$.	 By assumption it holds that $k_n = n^{1/2}t_n$ for some sequence $t_n \to 0$. Without loss of generality, we may assume that $k_n \to \infty$. 

For any $k$ let $\cE_{k,n}$ denote the set of all pairs $(T,x)$ of a plane tree $T$ having at most $n t_n$ vertices and a vertex $x$ having height $\he_T(x) = k$, such that $\Pr{H_k(\cT_n, v_0) = (T,x)} > 0$. We are going to argue that as $n \equiv 1 \mod \spa(\mathbf{w})$ becomes large
		\begin{enumerate}[\qquad i)]
			\item $\Pr{ H_{k_n}(\cT^*, u_0) \in \cE_{k_n, n}} \to 1$,
			\item $\Pr{ H_{k_n}(\cT_n, v_0) \in \cE_{k_n, n} } \to 1$,
			\item $\sup_{(T,x) \in \cE_{k_n, n}} |\Pr{H_{k_n}(\cT_n, v_0)  = (T,x) } / \Pr{  H_{k_n}(\cT^*, u_0) = (T,x) } - 1 | \to 0$.
		\end{enumerate}
		This suffices, as i) and ii) imply that
		\begin{multline}
		d_{\textsc{TV}}( H_{k_n}(\cT_n, v_0), H_{k_n}(\cT^*, u_0)) = \\o(1) + \sup_{\cH \subset \cE_{k_n, n}} | \Pr{H_{k_n}(\cT_n, v_0)\in \cH} - \Pr{H_{k_n}(\cT^*, u_0) \in \cH}|,
		\end{multline}
		and this expression converges to zero by iii).
		
		We start with property i). We have to show that $H_{k_n}(\cT^*, u_0)$ has with high probability at most $n t_n$ vertices. The size of $H_{k}(\cT^*, u_0)$ is given by the sum of $|H_{0}(\cT^*, u_0)| \eqdist |\cT|$ and the  independent differences \[|H_{i}(\cT^*, u_0)| - |H_{i-1}(\cT^*, u_0)| \eqdist 1 + S_{\hat{\xi}-1}, \quad i=1\ldots k,\] with $\hat{\xi}$ the size-biased version of the offspring distribution $\xi$, and \[S_m = X_1 + \ldots + X_m\] a sum of independent copies $(X_j)_j$ of $|\cT|$. The reason for this is that $H_{i}(\cT^*, u_0)$ is given by the root-vertex $u_i$, with the tree $H_{{i-1}}(\cT^*, u_0)$ and $d^+_{\cT^*}(u_i) -1\eqdist \hat{\xi} -1$ independent copies of $\cT$ dangling from it. Compare with Figure~\ref{fi:ald}.
		
		Hence  \[
		|H_{k}(\cT^*, u_0)| \eqdist k + S_{ M_k - (k-1)}
		\] is stochastically bounded by the sum $S_{M_{k+1}}$ with \[M_i = Z_1 + \ldots + Z_{i}\] the sum of $i$ independent copies of $\hat{\xi}$ for all $i \ge 1$. (Here we have used that $\hat{\xi} \ge 1$ by definition.) By a general result for the size of Galton--Watson forests, there is a constant $C>0$ such that 
		\[
		\Pr{S_m \ge x} \le C m x^{-1/2}
		\]
		for all $m$ and $x$. See Devroye and Janson \cite[Lem. 4.3]{MR2829308} and Janson \cite[Lem. 2.1]{MR2245498}. We assumed that $\sigma^2< \infty$, hence $\hat{\xi}$ has a finite first moment. It follows that
		\[
		\Pr{|H_{k}(\cT^*, u_0)| \ge x } \le \Pr{S_{M_{k+1}} \ge x} \le C \Ex{M_{k+1}} x^{-1/2} = C (k+1) \Ex{\hat{\xi}} x^{-1/2}.
		\]
		Setting $x=n t_n$ and $k=k_n = n^{1/2}t_n$, it follows by $k_n \to \infty$ and $t_n \to 0$ that \[\Pr{|H_{k_n}(\cT^*, u_0)| \ge nt_n} \le C (n^{1/2}t_n + 1) \Ex{\hat{\xi}} (n t_n)^{-1/2} = o(1).
		\]  This verifies i).

		Property ii) is actually a consequence of properties i) and iii). Indeed, iii) implies that 
		\[
		\Pr{  H_{k_n}(\cT_n, v_0) \in \cE_{k_n,n}} - \Pr{  H_{k_n}(\cT^*, u_0) \in \cE_{{k_n},n}} \to 0,
		\]
		and by i) it follows that
		\[
		\Pr{  H_{k_n}(\cT_n, v_0) \in \cE_{k_n,n}} \to 1.
		\]
		
		It remains to verify iii). Let $(T,x) \in \cE_{k_n,n}$. Given $\cT_n$, there is a one to one correspondence between the vertices $v \in \cT_n$ with fringe subtree $f(\cT_n, v) = T$, and the vertices $v'$ with $H_{k_n}(\cT_n, v') = (T,x)$. Thus
		\begin{align}
			\label{eq:t1}
			\Pr{H_{k_n}(\cT_n, v_0) = (T,x)} = \Pr{f(\cT_n,v_0) = T}.
		\end{align}
		The fringe subtree $f(\cT^*, u_{k_n})$ is distributed like the modified Galton--Watson tree, in which there are two types of vertices, normal and special, and we start with a special root. Normal vertices receive offspring according to an independent copy of $\xi$ and all of those are normal again. Special vertices receive offspring according to an independent copy of $\hat{\xi}$, and one of them is selected uniformly at random and declared its heir. If the heir has height less than $k_n$, then it is declared special, and otherwise it becomes a normal vertex. Here the unique heir that is not special corresponds to the vertex $u_0$. The probability for a special vertex to have $\ell$ offspring such that precisely the $i$th is selected as heir is given by
		\[
			\Pr{\hat{\xi} = \ell} / \ell = \Pr{\xi=\ell}.
		\]
		Hence
		\begin{align}
			\label{eq:t2}
			\Pr{ H_{k_n}(\cT^*, u_0) = (T,x)} = \Pr{\cT = T}.
		\end{align}
		Combining Equations~\eqref{eq:t1} and \eqref{eq:t2} yields
		\[
			\Pr{H_{k_n}(\cT_n, v_0) = (T,x)} / 	\Pr{ H_{k_n}(\cT^*, u_0) = (T,x)} = \Pr{f(\cT_n,v_0) = T} / \Pr{\cT = T}.
		\]
		Remark 15.8, Equation (17.1) and subsequent equations in Janson's survey \cite{MR2908619} yield that
		\[
		\Pr{f(\cT_n,v_n) = T_k} / \Pr{ \cT = T_k} = \Pr{\overline{S}_{n-|H_k|} = 0} /  \Pr{\overline{S}_{n} = -1}
		\]
		with $\overline{S}_\ell$ denoting the sum of $\ell$ independent copies of $\xi -1$. Since $(T,x) \in \cE_{k_n,n}$, the tree $T$ has at most $n t_n$ vertices. Consequently, the local limit theorem for sums of lattice distributed random variables \cite[Ch. 3.5]{MR2722836} yields that uniformly for all $(T,x) \in \cE_{k_n,n}$ as $n \equiv 1 \mod \spa(\mathbf{w})$ becomes large
		\[
		\Pr{\overline{S}_{n-|T|} = 0} /  \Pr{\overline{S}_{n} = -1} = (1+o(1)) \frac{o(1) + \frac{\spa(\mathbf{w})}{\sqrt{2\pi \sigma^2}}} {o(1) + \frac{\spa(\mathbf{w})}{ \sqrt{2 \pi \sigma^2} } \exp(\frac{-1}{ 2n \sigma^2})} = 1 + o(1).
		\]
		This verifies iii) and hence completes the proof.
\end{proof}

\subsection{General observations}
\label{sec:genob}

We state two observations that are valid for weight sequences having arbitrary type. The first describes the asymptotic probability to encounter small extended fringe subtrees.

\begin{lemma}
	\label{le:general}
	Let $v_0$ be a uniformly at random selected vertex of the simply generated plane tree $\cT_n$. Let $T^\bullet$ be a pointed plane tree whose pointed vertex has height $h \ge 0$. Then the probability, that the pointed fringe subtree at the $h$th ancestor $v_h$ of the random vertex $v_0 \in \cT_n$ is equal to the pointed tree $T^\bullet$, converges to the probability, that the pointed fringe subtree of $\cT^{*}$ at the spine vertex $u_h$ is equal ot $T^\bullet$. That is, with $\cT_n^\bullet = (\cT_n, v_0)$ it holds that
	\[
		\Pr{f^\bullet(\cT_n^\bullet, v_h) = T^\bullet} \to \Pr{f^\bullet(\cT^*, u_h) = T^\bullet}.
	\]
	Here we consider $\cT^*$ as pointed at the center $u_0$.
	If we let $V \subset \VHT^\bullet$ denote the subtree of the modified Ulam--Harris tree $\UHT^\bullet$ that corresponds to the pointed tree $T^\bullet$, then this may be expressed by
	\[
	\Pr{ d^+_{\cT_n^\bullet}(v) = d^+_{T^\bullet}(v) \text{ for all $v \in V$}} \to \Pr{ d^+_{\cT^{*}}(v) = d^+_{T^\bullet}(v) \text{ for all $v \in V$}}.
	\]
\end{lemma}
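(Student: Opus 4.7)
The plan is to reduce the statement to the convergence of the fringe subtree distribution of simply generated trees, via a counting argument that exploits the uniform choice of $v_0$ together with a direct evaluation of the probability on the $\cT^*$ side.

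First, let $T$ denote the underlying unpointed plane tree of $T^\bullet$ and let $p \in T$ be its pointed vertex, at height $h$. Set $N_T(\tau) := |\{w \in \tau : f(\tau, w) = T\}|$ for any finite plane tree $\tau$. For each realisation $\tau_n$ of $\cT_n$, a vertex $v \in \tau_n$ satisfies $f^\bullet(\tau_n, v_h) = T^\bullet$ precisely when $v$ has height at least $h$, its $h$-th ancestor $w$ satisfies $f(\tau_n,w) = T$, and $v$ corresponds to the vertex $p$ under the canonical identification of $f(\tau_n,w)$ with $T$. Each $w$ with $f(\tau_n,w) = T$ thus gives rise to a unique such $v$, and conversely; this bijection together with the uniform selection of $v_0$ yields
\[
\Pr{f^\bullet(\cT_n^\bullet, v_h) = T^\bullet} = \frac{\Ex{N_T(\cT_n)}}{n}.
\]

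Second, I would evaluate the right-hand side of the target statement by unrolling the construction of $\cT^*$. Write $p = w_0, w_1, \ldots, w_h$ for the path from the pointed vertex to the root of $T$. Since every outdegree in $T^\bullet$ is finite, the event $\{f^\bullet(\cT^*, u_h) = T^\bullet\}$ only probes the first $h$ spine-steps of the construction, and this initial portion is described identically in all three types: one simply needs $\hat\xi_i < \infty$ for $1 \le i \le h$, so the distinguished infinite spine-indices $i_1, i_2$ from the type II/III construction play no role. For each $1 \le i \le h$, the spine-vertex $u_i$ contributes
\[
\Pr{\hat\xi_i = d^+_T(w_i)} \cdot \frac{1}{d^+_T(w_i)} = \pi_{d^+_T(w_i)},
\]
where the factor $1/d^+_T(w_i)$ reflects the uniform choice of which offspring of $u_i$ is distinguished as $u_{i-1}$. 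The remaining vertices of $T$ are then covered by the independent copies of $\cT$ dangling off $u_1, \ldots, u_h$ and by the $\cT$-copy rooted at $u_0$. Multiplying everything out gives $\Pr{f^\bullet(\cT^*, u_h) = T^\bullet} = \prod_{v \in T} \pi_{d^+_T(v)} = \Pr{\cT = T}$.

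The third step is the convergence
\[
\frac{\Ex{N_T(\cT_n)}}{n} \to \Pr{\cT = T},
\]
which is exactly the fringe subtree convergence for simply generated trees due to Janson~\cite{MR2908619} and is valid for weight-sequences of all three types; in the type I regime it may alternatively be derived in a few lines from the local limit calculation already spelled out in the proof of Theorem~\ref{te:case1a}. Combining the three steps yields the claim. I expect no serious obstacle: the only delicate point is to confirm that $\Pr{f^\bullet(\cT^*, u_h) = T^\bullet}$ has the same explicit form in types II and III as in type I, which is handled by the observation that a finite target $T^\bullet$ only exercises the initial portion of the construction of $\cT^*$, before any infinite-degree vertex can appear on the probed spine.
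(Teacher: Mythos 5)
Your proposal is correct and follows essentially the same route as the paper's proof: you reduce the pointed fringe event to the ordinary fringe event via the one-to-one correspondence between vertices $v$ with $f(\cT_n,v)=T$ and vertices whose $h$th ancestor produces the pointed fringe tree $T^\bullet$ (the paper phrases this as $\Pr{f^\bullet(\cT_n^\bullet,v_h)=T^\bullet}=\Pr{f(\cT_n,v_0)=T}$, which is the same as your $\Ex{N_T(\cT_n)}/n$), you identify $\Pr{f^\bullet(\cT^*,u_h)=T^\bullet}=\Pr{\cT=T}$ using exactly the cancellation $\Pr{\hat\xi=\ell}/\ell=\pi_\ell$, and you invoke Janson's fringe-subtree convergence theorem (Thm.~7.12 of \cite{MR2908619}), which indeed holds for all three weight types. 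The only cosmetic difference is the order in which the second and third steps are presented.
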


Here $f^\bullet(\cdot, \cdot)$ denotes the pointed fringe subtree as defined in Section~\ref{sec:approx}.
The following result describes the asymptotic probability for extended fringe subtrees containing an ancestor with large degree.

\begin{lemma}
	\label{le:condens}
	Let $v_0$ be a uniformly at random selected vertex of the simply generated plane tree $\cT_n$. Let $T^\bullet$ be a pointed plane tree whose pointed vertex has height $h \ge 0$.
	For all integers $1 \le k \le h$ and all sufficiently large $\ell$ we may consider the event, that the outdegrees of the pointed fringe subtree of the $h$th ancestor of the random vertex $v_0 \in \cT_n$ all agree with the outdegrees of $T^\bullet$, except for the $k$th ancestor of $v_0$, which is required to have at least $\ell$ offspring to the left and at least $\ell$ offspring to the right of its unique son that is also an ancestor of $v_0$. As $n$ becomes large, this probability converges toward an expression that depends on $\ell$. If we let $\ell$ tend to infinity, then this expression converges toward the probability, that the outdegrees of the fringe subtree of $\cT^{*}$ at the spine vertex $u_h$ agree with the outdegrees of $T^\bullet$, except for $u_k$, which must have outdegree $(\infty, \infty)$. In other words, $u_k$ is required to have an infinite number of offspring vertices to the left and to the right of $u_{k-1}$. Expressed in more formal words, let $V \subset \VHT^\bullet$ denote the subtree of $\UHT^\bullet$ that corresponds to the tree $T^\bullet$.  Then the event $\cE(\ell,n)$ that
\[
d^+_{\cT_n^\bullet}(v) = d^+_{T^\bullet}(v)
\] for all $v \in V \setminus \{u_i\}$ and
\[
d^+_{\cT_n^\bullet}(u_k) \in \{\ell, \ell+1, \ldots\} \times \{\ell, \ell+1, \ldots\} 
\]
satisfies
\begin{multline*}
\lim_{\ell \to \infty} \lim_{n \to \infty} \Pr{\cE(\ell,n)} = \\  \Pr{d^+_{\cT^{*}}(v) = d^+_{T^\bullet}(v) \text{ for all $v \in V \setminus \{ u_k\}$}, d^+_{\cT^{*}}(u_k) = (\infty, \infty) }.
\end{multline*}
\end{lemma}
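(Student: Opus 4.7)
The plan is to reduce $\Pr{\cE(\ell, n)}$ to a prefix event for the balls-in-boxes vector $(Y_0, \ldots, Y_{n-1})$ from Section~\ref{sec:preli}, and then apply the condensation asymptotics of Section~\ref{sec:large}. The starting point is the identity
\[
\Pr{\cE(\ell, n)} = \frac{1}{n} \Ex{M_n(\ell)},
\]
where $M_n(\ell)$ counts those vertices $w \in \cT_n$ for which the outdegree pattern at the ``patch'' of $\cT_n$ around $w$ matches the specification for some $(a, b) \in \{\ell, \ell+1, \ldots\}^2$, while all other prescribed outdegrees in $V$ agree with those of $T^\bullet$. Each qualifying $w$ contributes exactly one candidate for $v_0$, namely the unique descendant of $w$ at the prescribed position in the patch, and the identity follows because $v_0$ is uniform on $\cT_n$ and independent of $\cT_n$.

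Next I would compute $\Ex{M_n(\ell)}$ via the balls-in-boxes coupling of Section~\ref{sec:preli}. Exploiting the freedom to select any valid exploration order, design an algorithm that first visits a candidate root $w$, then descends the spine of the patch from $w$ toward its designated pointed descendant while exploring the prescribed siblings along the way, with the remaining vertices treated by a standard depth-first search. By the cyclic-shift identity~\eqref{eq:shift}, $\Ex{M_n(\ell)}$ equals $n$ times the probability that a finite prefix of $(Y_0, \ldots, Y_{n-1})$ realizes one of the admissible patterns, summed over $a, b \ge \ell$, carrying the weight $\prod \omega_d$ at every prescribed outdegree and with the position of the distinguished son at $u_k$ fixed by the exploration. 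For each fixed $(a,b)$ the asymptotic behaviour may be read off from (the proof of) Lemma~\ref{le:general} and takes the form of a product of $\pi_d$-factors consistent with the construction of $\cT^*$.

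The main obstacle is that the iterated limit $\lim_{\ell \to \infty} \lim_{n \to \infty}$ cannot be evaluated by a pointwise application of Lemma~\ref{le:general}: the naive estimate
\[
\sum_{a, b \ge \ell} \pi_{a+b+1} \longrightarrow 0 \quad \text{as } \ell \to \infty
\]
would discard the entire condensation mass. The resolution is to treat the large coordinate of $(Y_0, \ldots, Y_{n-1})$ globally, following Janson~\cite[Ch.~20]{MR2908619}: by the coupling~\eqref{eq:couple} with the modified Galton--Watson tree $\hat{\cT}_{1n}$ and the joint-outdegree convergence~\eqref{eq:tvconv}, the condensed coordinate is distributed approximately like the size-biased spine degree $\tilde{D}_n$, which carries total mass $1 - \mu$ and escapes to infinity as $n \to \infty$. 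Re-indexing the exploration so that this large coordinate falls at the balls-in-boxes position corresponding to $u_k$---which is legitimate by the cycle lemma and the freedom in the choice of exploration---and only then summing over $a, b \ge \ell$ before sending $\ell \to \infty$, produces exactly the claimed right-hand side: the product of $\pi$-probabilities at the finite-degree vertices of $V \setminus \{u_k\}$, multiplied by the condensation mass $1 - \mu$ that corresponds to $\Pr{\hat{\xi} = \infty}$ in the construction of $\cT^*$.
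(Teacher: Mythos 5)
You correctly identify the central obstacle: summing over all $(a,b)$ with $a,b\ge\ell$ using only the pointwise fringe-tree limit of Lemma~\ref{le:general} would give total mass $\sum_{a,b\ge\ell}\pi_{a+b+\bar d_0}\to 0$ and throw away the condensation mass $1-\mu$. But your proposed remedy deviates from the paper and has a real gap. You appeal to the coupling $\delta_1(\cT_n,\hat{\cT}_{1n})\convp 0$ from \eqref{eq:couple} and to the size-biased degree $\tilde D_n$; that coupling, however, concerns the shape of $\cT_n$ in a metric neighbourhood of its \emph{root}, while the quantity $\Pr{\cE(\ell,n)}$ concerns the pointed fringe subtree at a uniformly \emph{random} vertex $v_0$. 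You do not explain how to transfer information from one to the other, and the step ``re-indexing the exploration so that the large coordinate falls at the balls-in-boxes position corresponding to $u_k$'' is not something the cycle lemma licenses: the cyclic shift produces a uniquely determined rotation, and the exploration rule cannot be chosen adaptively depending on where the large coordinate happens to be without destroying the identity \eqref{eq:balls}.

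The paper's proof avoids $\hat{\cT}_{1n}$ and $\tilde D_n$ altogether and works for arbitrary type II/III weight sequences. After the modified DFS (which front-loads the prescribed pattern so that $\Pr{f^\bullet(\cT_n^\bullet,v_k)\in\cE_{k_1,k_2}(T^\bullet)}=\Pr{(Y_0,\dots,Y_\ell)=(K,\bar d_1,\dots,\bar d_\ell)}$ with $K=k_1+k_2+\bar d_0$), it sums exactly over $r\ge K$ and conditions on the degree counts $(N_j)$ to get
\[
\Exb{\frac{\sum_{r\ge K}rN_r}{n}\prod_{i=1}^\ell\frac{N_{\bar d_i}}{n}+(1-K)\frac{\sum_{r\ge K}N_r}{n}\prod_{i=1}^\ell\frac{N_{\bar d_i}}{n}}+O(K/n).
\]
The decisive trick is then the deterministic identities $\sum_j jN_j=n-1$ and $\sum_j N_j=n$, which let one replace the tail sums by complements of finite sums; applying Janson's $N_j/n\convp\Pr{\xi=j}$ for each fixed $j<K$ and dominated convergence yields the $n\to\infty$ limit $\bigl(\Pr{\hat\xi\ge K}+(1-K)\Pr{\xi\ge K}\bigr)\prod_{i=1}^\ell\Pr{\xi=\bar d_i}$, and the prefactor is shown by an elementary truncation argument to converge to $1-\mu$ as $K\to\infty$. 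This is where the condensation mass re-enters; it is an algebraic consequence of $\sum_j jN_j=n-1$, not of a coupling near the root. You should incorporate this telescoping step explicitly; without it (or some equivalent), your outline does not close.
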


These results certainly deserve some explanation. If the weight sequence $\mathbf{w}$ has type I, then Lemma~\ref{le:general} immediately yields weak convergence of $(\cT_n, v_0)$ toward $\cT^{*}$. This proves Theorem~\ref{te:case1}. Aldous \cite{MR1102319} showed a similar form of convergence for the case where $\mathbf{w}$ has type I and the associated offspring distribution has finite variance, and Janson~\cite[Thm. 7.12]{MR2908619} established convergence of the fringe subtree at $v_0$ for arbitrary weights. The proof of Lemma~\ref{le:general} uses this result and various others from \cite{MR2908619}.

In the type II and III setting, the situation is more complicated and Lemma~\ref{le:condens} is not sufficient to deduce convergence in $\fmT^\bullet$ for arbitrary weight-sequences. It is  intuitive, that a random vertex is only likely to be close to the root, if one of its ancestors has large degree. Lemma~\ref{le:condens} provides a description of what happens near a random root up to its first ancestor that has large degree. Beyond that, we only obtain information on what happens in the case that the ancestors of this ancestor have small degree. Since the space $\fmT^\bullet$ is compact, the sequence $(\cT_n, v_n)$ clearly converges weakly toward a limit along a subsequence, and the distribution of this limit must agree by Lemma~\ref{le:condens} with  $\cT^{*}$ until the point where the spine of $\cT^{*}$ stops. However, at this location, we could encounter the root vertex of the limit, but just as well a second ancestor with large degree.

\begin{proof}[Proof of Lemma~\ref{le:general}]
	Let $v_0, v_1, \ldots$ be the directed path from $v_0$ to the root of $\cT_n$. Let $T^\bullet = (T,v)$ be a pointed, finite plane tree and let $h$ denote the height of the vertex $v$ in $T$. Consider the event that $v_0$ has height at least $h$, and that the pointed fringe subtree $f^\bullet(\cT_n^\bullet, v_h)$ of $\cT_n^\bullet = (\cT_n, v_0)$ is equal to $T^\bullet$. Given $\cT_n$, there is a one to one correspondence between the vertices $v$ with fringe subtree $f(\cT_n, v) = T$ and the vertices $v'$ whose $h$-th ancestor $u$ has pointed fringe subtree $f^\bullet((\cT_n,v'), u) = T^\bullet$. Thus,
	\begin{align}
	\label{eq:ff1}
	\Pr{f^\bullet(\cT_n^\bullet, v_h) = T^\bullet} = \Pr{f(\cT_n, v_0) = T}.
	\end{align}
	Janson \cite[Thm. 7.12]{MR2908619} showed that \begin{align}
	\label{eq:ff2}
	\lim_{n \to \infty}  \Pr{f(\cT_n, v_0) = T} = \Pr{\cT = T}.
	\end{align}
	The probability for the size-biased random variable $\hat{\xi}$ to assume a value $\ell$, and that a uniformly at random choice out of $\ell$ options yields a specific value $\ell_0$, is equal to the probability that $\xi$ equals $\ell$. Thus
	\[
	\Pr{\cT = T} = \Pr{f^\bullet(\cT^{*}, u_h) = T^\bullet}.
	\]
	Combined with Equations~\eqref{eq:ff1} and \eqref{eq:ff2} this yields that
	\begin{align}
	\label{eq:ff3}
	\lim_{n \to \infty} \Pr{f^\bullet(\cT_n^\bullet, v_h) = T^\bullet} = \Pr{f^\bullet(\cT^{*}, u_h) = T^\bullet}.
	\end{align}
	This proves the first claim.
\end{proof}

\begin{proof}[Proof of Lemma~\ref{le:condens}]
	Let $T^\bullet = (T,v)$ denote a finite pointed plane tree, where the pointed vertex is not equal to its root.
	Let $k_1, k_2$ be arbitrary non-negative integers whose sum is larger than the maximum degree of the tree $T$. This assumption will be crucial in the following argument.
	
	Suppose that $o$ is a vertex that lies on the path from the root to the pointed vertex of $T^\bullet$, but is not equal to the pointed vertex. In order to not confuse the three vertices, let us call the root of $T$ the inner root, the pointed vertex of $T^\bullet$ the outer root, and the vertex $o$ the middle root.
	
	Let $\cE_{k_1, k_2}(T^\bullet)$ denote the set of pointed plane trees obtained by connecting the root vertices of $k_1$ arbitrary plane-trees from the left to the middle root $o$ of $T^\bullet$, and $k_2$ from the right. We are interested in the event that $f^\bullet(\cT_n^\bullet, v_k) \in \cE_{k_1, k_2}(T^\bullet)$. Let $\cE_{k_1, k_2}(T)$ denote the corresponding set where we forget about which outer-root (the pointed vertex) we distinguished. Equation~\eqref{eq:ff1} yields
	\begin{align}
	\label{eq:gg1}
	\Pr{f^\bullet(\cT_n^\bullet, v_k) \in \cE_{k_1, k_2}(T^\bullet)} = \Pr{f(\cT_n, v_0) \in \cE_{k_1, k_2}(T)}.
	\end{align}
	In order to study this asymptotic probability, we make use of a modified depth-first search of the tree.
	
	Traditional depth-first-search (DFS) lists the vertices of a plane tree by starting with the root, and traverses in each step along the left-most previously unvisited son. If no such son exists, we go to the parent of the current vertex and try again. The process terminates with an ordered list of all vertices of the tree. Note that at any time the search maintains an ordered list of vertices that it already visited, and an ordered list $\cQ$ of vertices that are scheduled to be visited next. Anytime we visit a new vertex that is not a leaf, vertices are added to the front of the queue $\cQ$ of vertices that are to be visited next.
	
	Let $K$ be the sum of $k_1$, $k_2$ and the outdegree  $d_T^+(o)$ of the middle root of $T$.  We may modify the DFS  by treating vertices with out-degree $K$ in a special manner. When we encounter such a vertex, instead of putting all its offspring in front of the queue $\cQ$, we put the $(k_1+1)$th to $(k_1 + d_T^+(o))$th offspring to the front of the queue $\cQ$, and the remaining offspring to the back. 
	Thus, if none of the fringe subtrees of the vertices we put to the front of the queue has a vertex with degree $K$,  we traverse next along the $(k_1+1)$th son its entire fringe subtree, and so on, until the $(k_1 + d_T^+(o))$ son and its entire fringe subtree. After this we proceed with the remaining siblings of $o$.
	
	As we assumed that $K$ is larger than the maximum degree of $T$, this means that if we search a tree $T'$ from $\cE_{k_1, k_2}(T)$, the first $|T|$ vertices in the resulting list of ordered vertices correspond to the vertices of $T$, and their outdegrees are equal to those in $T$, except for the vertex $o$, which has outdegree $d^+_{T'}(o) = K$.
	
	We now proceed similarly as in the proof of Janson's result \cite[Thm. 7.12]{MR2908619} where classical DFS was used. Let $d_0, \ldots, d_{n-1}$ denote the list of outdegrees in the simply generated tree $\cT_n$ according to the modified DFS-order. For any $i \ge n$ we set $d_i = d_{i \mod n}$. Let $\hat{d}_0, \ldots, \hat{d}_{\ell}$ denote the DFS-ordered list of outdegrees in $T$, and let $i_0$ denote the unique index that corresponds to the vertex $o$. Since $K$ is larger than the maximal outdegree of $T$, the vertices $v$ of $\cT_n$ with fringe-subtree in $\cE_{k_1, k_2}(T)$ correspond bijectively to the indices $0 \le i \le n-1$ with 
	\[
	(\hat{d}_0, \ldots, \hat{d}_{i_0 -1}, K, \hat{d}_{i_0 + 1}, \ldots, \hat{d}_\ell) = (d_i, d_{i+1}, \ldots, d_{i+\ell}).
	\]
	This explicitly includes the case where $i$ is so close to $n-1$ such that $i+\ell > n-1$. It is not possible for a tree to have an ending segment in its list of vertices that is equal to an initial segment of $(K,\hat{d}_1,  \ldots, \hat{d}_\ell)$, because then the search of the tree would have terminated with a non-empty queue $\cQ$ of vertices that still need to be visited.

	Consider the balls-in-boxes model $(Y_0, \ldots, Y_{n-1})$ from Equation~\eqref{eq:balls}. We set $Y_i = Y_{i \mod n}$ for $i \ge n$. For all $0 \le j \le n-1$ let $I_j$ be the indicator for the event
	\[
	(Y_{j}, \ldots, Y_{j+\ell}) = (\hat{d}_0, \ldots, \hat{d}_{i_0 -1}, K, \hat{d}_{i_0 + 1}, \ldots, \hat{d}_\ell).
	\]
	The sum $\sum_{j=0}^{n-1} I_j$ is rotational invariant, hence it follows from Equation~\eqref{eq:shift} that
	\begin{align*}
	\Pr{f(\cT_n, v_0) \in \cE_{k_1, k_2}(T)} &= \Ex{n^{-1} \sum_{j=0}^{n-1} I_j}  
	= \Ex{I_0} \\
	&= \Pr{(Y_0, \ldots, Y_\ell) = (\hat{d}_0, \ldots, \hat{d}_{i_0 -1}, K, \hat{d}_{i_0 + 1}, \ldots, \hat{d}_\ell)}.
	\end{align*}
	For ease of notation, we define
	\[
		(\bar{d}_0, \bar{d}_1, \ldots, \bar{d}_\ell) := (\hat{d}_{i_0}, \hat{d}_0, \ldots, \hat{d}_{i_0 -1}, \hat{d}_{i_0 + 1}, \ldots, \hat{d}_\ell).
	\]
	By exchangeability, it follows that
	\[
		\Pr{f(\cT_n, v_0) \in \cE_{k_1, k_2}(T)} = \Pr{(Y_0, \ldots, Y_\ell) = (K, \bar{d}_1, \ldots, \bar{d}_\ell)}.
	\]
	Combining this with Equation~\eqref{eq:gg1} yields
	\begin{align}
	\label{eq:gg2}
	\Pr{f^\bullet(\cT_n^\bullet, v_k) \in \cE_{k_1, k_2}(T^\bullet)} = \Pr{(Y_0, \ldots, Y_\ell) = (K,\bar{d}_1,  \ldots, \bar{d}_\ell)}.
	\end{align}
	Setting
	\[
	\cE_{\ge k_1, \ge k_2} = \bigcup_{\ell_1 \ge k_1, \ell_2 \ge k_2} \cE_{\ell_1, \ell_2},
	\]
	it follows that
	\begin{align}
	\label{eq:schuessel}
	\Pr{f^\bullet(\cT_n^\bullet, v_k) \in \cE_{\ge k_1, \ge k_2}(T^\bullet)} &= \sum_{r \ge K}  \sum_{\substack{\ell_1 + \ell_2 + \bar{d}_0 = r \\ \ell_1 \ge k_1, \ell_2 \ge k_2}} \Pr{(Y_0, \ldots, Y_\ell) = (r,\bar{d}_1,  \ldots, \bar{d}_\ell)} \nonumber \\
	&= \sum_{r \ge K} (r-K+1)\Pr{(Y_0, \ldots, Y_\ell) = (r,\bar{d}_1,  \ldots, \bar{d}_\ell)}.
	\end{align}
	For any $j \ge 0$, let $N_j$ denote the number of indices $0 \le i \le n-1$ with $Y_i = j$. Conditioned on the $N_j$, the numbers $Y_0, Y_1, \ldots$ are obtained by placing $N_0$ $0$'s, $N_1$ $1$'s, \ldots, in uniformly random order. So, as stated in \cite[Eq. (14.44)]{MR2908619} (with the slight notational difference that Janson labelled the boxes from $1$ to $n$ rather than from $0$ to $n-1$), it follows that for $r \ge K$
	\begin{align}
		\label{eq:s0}
	\Pr{(Y_0, \ldots, Y_\ell) = (r,\bar{d}_1,  \ldots, \bar{d}_\ell) \mid N_0, N_1, \ldots} = \frac{N_r}{n} \prod_{i=1}^\ell \frac{N_{\bar{d}_i} - c_i }{n-i}
	\end{align}
	with $c_i$ denoting the number of $1 \le j <i$ with $\bar{d}_j = \bar{d}_i$. (Here we have used the fact that $K$ (and hence also $r$) is larger than $\bar{d}_1, \ldots, \bar{d}_\ell$.) Hence
	\begin{align}
	\label{eq:schuessel2}
	\Pr{(Y_0, \ldots, Y_\ell) = (r,\bar{d}_1,  \ldots, \bar{d}_\ell)} &= \Exb{ \frac{N_r}{n} \prod_{i=1}^\ell \frac{N_{\bar{d}_i} - c_i }{n-i}} \nonumber \\
	 &= \Exb{\frac{N_r}{n} \prod_{i=1}^\ell \frac{N_{\bar{d}_i}}{n} + O\left(\frac{N_r}{n^2}\right)}, 
	\end{align}
	where the implicit constant in the $O$ term does not depend on $n$ or $r$.   It follows by Equation \eqref{eq:schuessel} that
	\begin{multline*}
	\Pr{f^\bullet(\cT_n^\bullet, v_k) \in \cE_{\ge k_1, \ge k_2}(T^\bullet)} 
	= \\ \Exb{\frac{\sum_{r \ge K} rN_r}{n} \prod_{i=1}^\ell \frac{N_{\bar{d}_i}}{n} + (1-K) \frac{\sum_{r \ge K} N_r}{n} \prod_{i=1}^\ell \frac{N_{\bar{d}_i}}{n} + O\left( \frac{K \sum_{r \ge K} rN_r}{n^2}\right)}.
	\end{multline*}	
	It holds that \[\sum_{j \ge 1} j N_j = n-1 \qquad \text{and} \qquad \sum_{j \ge 0} N_j = n.\] Hence
	\begin{multline*}
	\Pr{f^\bullet(\cT_n^\bullet, v_k) \in \cE_{\ge k_1, \ge k_2}(T^\bullet)} 
	= \\ \Exb{\frac{n-1 - \sum_{r < K} rN_r}{n} \prod_{i=1}^\ell \frac{N_{\bar{d}_i}}{n} + (1-K) \frac{n - \sum_{r < K} N_r}{n} \prod_{i=1}^\ell \frac{N_{\bar{d}_i}}{n}} + O\left( \frac{K}{n}\right).
	\end{multline*}	
	By Janson's result \cite[Thm. 11.4]{MR2908619} it holds for each fixed $j$ as $n$ becomes large that
	\[
	N_j/n \convp \Pr{\xi = j}.
	\]
	Thus, by dominated convergence, it follows that as $n$ becomes large
	\begin{align}
	\label{eq:schuessel3}
	\Pr{f^\bullet(\cT_n^\bullet, v_k) \in \cE_{\ge k_1, \ge k_2}(T^\bullet)} \to (\Pr{\hat{\xi} \ge K} + (1-K) \Pr{\xi \ge K}) \prod_{i=1}^\ell \Pr{\xi = \bar{d}_i}.
	\end{align}
	Recall that the first moment of $\xi$ is given by \[
	\mu = \min(1, \nu) \in [0,1].
	\]
	Clearly it holds that
	\[
	\Pr{\hat{\xi} \ge K} + (1-K) \Pr{\xi \ge K} \le \Pr{\xi \ge K} + \Pr{\hat{\xi} \ge K} \to 1- \mu
	\]
	as $K$ becomes large. As for a lower bound, we may write for every $0 < \epsilon < 1$
	\begin{align*}
	\Pr{\hat{\xi} \ge K} + (1-K) \Pr{\xi \ge K} &\ge \Pr{\xi \ge K} + \sum_{k \ge K} (k - K) \Pr{\xi = k} \\
	&\ge \Pr{\xi \ge K} + \sum_{k > K/\epsilon} (1-\epsilon)k \Pr{\xi=k} \\
	&\to (1-\epsilon)(1-\mu)
	\end{align*}
	as $K$ becomes large. As $\epsilon>0$ was arbitrary, it follows that
	\[
	\Pr{\hat{\xi} \ge K} + (1-K) \Pr{\xi \ge K} \to 1 - \mu
	\]
	as $K$ tends to infinity. Hence Equation~\eqref{eq:schuessel3} implies that for any sequences $k_1(r)$ and $k_2(r)$ with $k_1(r) + k_2(r) \to \infty$ as $r$ becomes large it holds that
	\[
	\lim_{r \to \infty} \lim_{n \to \infty} \Pr{f^\bullet(\cT_n^\bullet, v_k) \in \cE_{\ge k_1(r), \ge k_2(r)}(T^\bullet)} \to (1-\mu)\prod_{i=1}^\ell \Pr{\xi = \bar{d}_i}.
	\]
	Since
	\[
		\Pr{d^+_{\cT^{*}}(v) = d^+_{T^\bullet}(v) \text{ for all $v \in V \setminus \{ u_k\}$}, d^+_{\cT^{*}}(u_k) = (\infty, \infty) } = (1-\mu)\prod_{i=1}^\ell \Pr{\xi = \bar{d}_i},
	\]
	this concludes the proof.
\end{proof}

\subsection{The limit theorems in the condensation regime}

\subsubsection{The type II regime}
\begin{proof}[Proof of Lemma~\ref{le:equ}]
	Suppose that the weight-sequence $\mathbf{w}$ has type II or III. 
	We need to show that the following three statements are equivalent.
	\begin{enumerate}
		\item  $(\cT_n, v_0) \convdis \cT^{*}$.
		\item $\he_{\cT_n}(v_0) \convdis \he_{\cT^{*}}(u_0)$.
		\item $\limsup_{n \to \infty} \Pr{\he_{\cT_n}(v_0) \ge k} \le \mu^k + k (1- \mu)\mu^{k-1}$ for all $k \ge 1$.
	\end{enumerate}
	It is clear that the first claims implies the second, since the height 
	\[
		\he: \fmT^\bullet \to \overline{\ndN}_0, \quad (T,x) \mapsto \he_T(x)
	\] is a continuous functional on the space $\fmT^\bullet$. The height $\he_{\cT^*}(u_0)$ of the pointed vertex in $\cT^*$ is distributed like $1$ plus the sum of two independent identically geometric random variables with parameter $\mu$. Thus the second claim implies the third. The convergence in Lemma~\ref{le:condens} immediately yields that for all $t \ge 1$
	\[
	\liminf_{n \to \infty} \Pr{ \he_{\cT_n}(v_0) \ge t} \ge   \Pr{\he_{\cT^{*}}(u_0) \ge t} = \mu^k + k (1- \mu)\mu^{k-1}.
	\]
	Hence the third claim implies the second. It remains to verify that the second claim implies the first. Suppose that 
	\begin{align}
	\label{eq:co}
	\he_{\cT_n}(v_0) \convdis \he_{\cT^{*}}(u_0).
	\end{align}
	Since the space $\fmT^\bullet$ is compact, any sequence of random pointed plane trees has a convergent subsequence. In particular, the sequence $(\cT_n, v_0)$   converges toward a limit object $\bar{\cT}$ along a subsequence $(n_k)_k$. We are going to show that 
	\begin{align}
		\label{eq:final}
	\bar{\cT} \eqdist \cT^*
	\end{align}
	 regardless of the subsequence. By standard methods \cite[Thm. 2.2]{MR0310933} this implies \[(\cT_n, v_0) \convdis \cT^*.\]
	 By Equation~\eqref{eq:co} it holds that
	 \begin{align}
		 \label{eq:height}
		 \he_{\bar{\cT}}(u_0) \eqdist \he_{\cT^*}(u_0).
	 \end{align}
Lemma~\ref{le:general} yields that for any finite tree $T^\bullet = (T,x) \in \fmT^\bullet$ with $\he_T(u_0) = k$ it holds that
	 \begin{align}
		 \label{eq:eq1}
		 \Pr{f^\bullet(\cT^*, u_k) = T^\bullet} = \Pr{f^\bullet(\bar{\cT}, u_k) = T^\bullet}.
	 \end{align}
	 By Lemma~\ref{le:condens} we know furthermore that for any index $1 \le i \le k$ it holds that
	 \begin{multline}
		 \label{eq:eq2}
		 \Pr{ d^+_{\cT^*}(u_i) = (\infty, \infty), d^+_{\cT^*}(v) = d_{T^\bullet}^+(v) \text{ for all $v \in V \setminus\{u_i\}$} } = \\
		 \Pr{ d^+_{\bar{\cT}}(u_i) = (\infty, \infty), d^+_{\bar{\cT}}(v) = d_{T^\bullet}^+(v) \text{ for all $v \in V \setminus\{u_i\}$} }
	 \end{multline}
	 with $V \subset \VHT^\bullet$ denoting the subset corresponding to the vertices of $T^\bullet$.
	 
	 We are going to show that Equations~\eqref{eq:height}, \eqref{eq:eq1} and \eqref{eq:eq2} are sufficient to verify that $\bar{\cT} \eqdist \cT^*$. The first step is to verify that
	 \begin{align}
		 \label{eq:step1}
		 (d^+_{\cT^*}(u_i))_{i \ge 1} \eqdist (d^+_{\bar{\cT}}(u_i))_{i \ge 1}
	 \end{align}
	 as random elements of the product space
	 \[
		 \left( \{*\} \sqcup (\overline{\ndN}_0 \times \overline{\ndN}_0) \right)^{\ndN}.
	\]
	 For this, it is sufficient to verify that for all $k \ge 1$
	 \[
	 (d^+_{\cT^*}(u_i))_{1 \le i \le k} \eqdist (d^+_{\bar{\cT}}(u_i))_{1 \le i \le k}.
	 \]
	 To this end, let
	 \[
		 d_1, \ldots, d_k \in \{*\} \sqcup (\overline{\ndN}_0 \times \overline{\ndN}_0)
	 \]
	 be given, such that there exists an index $0 \le j \le k$ such that for all $i > j$ it holds that $d_i = *$. We are going to show that
	 \begin{align}
		 \label{eq:intermediate1}
		 \Pr{ d^+_{\cT^*}(u_i) = d_i, 1 \le i \le k} = \Pr{ d^+_{\bar{\cT}}(u_i) = d_i, 1 \le i \le k}.
	 \end{align}
	 This suffices, as the set of indices $i$ with $d^+_{\cT^*}(u_i) = *$ must form a tail-segment of $(1, \ldots, k)$, since $\cT^*$ is a tree, and likewise for $\bar{\cT}$. Moreover, Equation~\eqref{eq:height} implies that almost surely \[d^+_{\bar{\cT}}(u_1), d^+_{\cT^*}(u_1) \ne *.\] Hence we may additionally assume that
	 $
		 j \ge 1.
	 $
	 
	 First, let us observe that
	 \begin{align}
		 \label{eq:careful}
		 \Pr{ d^+_{\cT^*}(u_i) = d_i, 1 \le i \le j} \le \Pr{ d^+_{\bar{\cT}}(u_i) = d_i, 1 \le i \le j}.
	 \end{align}
	 Indeed, the left-hand side is equal to zero unless  $d_i \in \ndN_0\times \ndN_0$ for all $1 \le i \le j$ with the exception of at most one index $i_0$ for which we allow that $d_{i_0} = (\infty, \infty)$. If the $d_i$ satisfy this property, we may argue as follows.
	 We constructed the tree $\cT^*$ in a way such that for all vertices $v \in \VHT^\bullet \setminus \{u_1, u_2, \ldots\}$ the fringe-subtree $f(\cT^*,v)$ is finite.  Hence the event $d^+_{\cT^*}(u_i) = d_i, 1 \le i \le j$ is a countable disjoint union of events of the form considered in Equations~\eqref{eq:eq1} and \eqref{eq:eq2}. That is, if all $d_i \in  \ndN \times \ndN$ for $1 \le i \le j$, then Inequality~\eqref{eq:careful} follows by applying Equation~\eqref{eq:eq1} for countably many finite pointed trees $T^\bullet \in \fmT^\bullet$. If $d_{i_0} = (\infty, \infty)$ for an index $1 \le i_0 \le j$, then Inequality~\eqref{eq:careful} follows by applying Equation~\eqref{eq:eq2} for countably many finite trees $T ^\bullet\in \fmT^\bullet$ with $d_{T^\bullet}^+(u_{i_0}) = (0,0)$. 
	 
	 Thus Inequality~\eqref{eq:careful} holds. If we sum over all $d_1, \ldots, d_j \in \overline{\ndN}_0 \times \overline{\ndN}_0$, then the left-hand side of \eqref{eq:careful} sums up to $\Pr{\he_{\cT^*}(u_0) \ge j}$, and the right-hand side to $\Pr{\he_{\bar{\cT}}(u_0) \ge j}$. But these two quantities are equal by Equation~\eqref{eq:height}. Thus it follows that already
	 \begin{align}
	 \label{eq:careful2}
	 \Pr{ d^+_{\cT^*}(u_i) = d_i, 1 \le i \le j} = \Pr{ d^+_{\bar{\cT}}(u_i) = d_i, 1 \le i \le j}
	 \end{align}
	 for all $d_1, \ldots, d_j \in \overline{\ndN}_0 \times \overline{\ndN}_0$. 
	 
	 If $j = k$, then Equation~\eqref{eq:careful2} is identical to Equation~\eqref{eq:intermediate1}. Otherwise, if $1 \le j <k$, then Equation~\eqref{eq:careful2} implies that
	 \begin{align*}
		 &\Pr{ d^+_{\cT^*}(u_i) = d_i, 1 \le i \le k} \\ &\,\,=  \Pr{ d^+_{\cT^*}(u_{j+1}) = *, d^+_{\cT^*}(u_i) = d_i \text{ for all } 1 \le i \le j} \\
		 &\,\,= \Pr{ d^+_{\cT^*}(u_i) = d_i, 1 \le i \le j } 
		 - \sum_{d \in \overline{\ndN}_0 \times \overline{\ndN}_0} \Pr{d^+_{\cT^*}(u_{j+1}) = d, d^+_{\cT^*}(u_i) = d_i, 1 \le i \le j}.
	 \end{align*}
	 Of course, the same holds if we replace $\cT^*$ by $\bar{\cT}$. It follows by Equation~\eqref{eq:careful2}, that the last expression is equal for $\cT^*$ and $\bar{\cT}$. This verifies Equation~\eqref{eq:intermediate1}, and hence also Equation~\eqref{eq:step1}.
	 
	 In order to verify that $\cT^* \eqdist \bar{\cT}$, we may proceed in a similar manner. Letting $d_1, \ldots, d_k$ and $1 \le j \le k$ be as before, Equations~\eqref{eq:eq1}, \eqref{eq:eq2} and \eqref{eq:step1} imply that
	 \begin{align}
		 \label{eq:step2}
		 (f^\bullet(\cT^*, u_j) \mid d^+_{\cT^*}(u_i) = d_i, 1 \le i \le j) \eqdist (f^\bullet(\cT_n^*, u_j) \mid d^+_{\bar{\cT}}(u_i) = d_i, 1 \le i \le j).
	 \end{align}
	 Indeed, if $d_1, \ldots, d_j$ are finite, then there are only countably many values that the pointed fringe tree \[
	 T_1 := (f^\bullet(\cT^*, u_j) \mid d^+_{\cT^*}(u_i) = d_i, 1 \le i \le j)
	 \]
	  may assume.  By Equation~\eqref{eq:eq1} and \eqref{eq:step1}, the tree
	  \[
		  T_2 := (f^\bullet(\cT_n^*, u_j) \mid d^+_{\bar{\cT}}(u_i) = d_i, 1 \le i \le j)
	  \]
	  assumes each with the same probability as $T_1$, so it follows that  the tree $T_2$ is also supported on a countable set and $T_1 \eqdist T_2$. As for the other case, suppose that $d_{i_0} = (\infty, \infty)$ for a unique index $1 \le i_0 \le j$. For each $\ell \ge 0$ we may look at the canonically ordered finite list $L_\ell(T_1)$ of fringe subtrees at the sons $v \ne u_1, \ldots, u_{j-1}$ of the  $u_i$ for $i \ne i_0$ and at the first $\ell$ siblings to the left and to the right of $u_{j-1}$. Again there are only countably many outcomes for $L_\ell(T_1)$, as each of these fringe trees must be finite.  By Equations~\eqref{eq:eq2} and \eqref{eq:step1}, the list $L_\ell(T_2)$ assumes each with the same probability. Hence $L_\ell(T_2)$ is also supported on a countable set and  $L_\ell(T_1) \eqdist L_\ell(T_2)$. As this holds for arbitrary $\ell$, it follows that  $T_1 \eqdist T_2$.  Hence Equation~\eqref{eq:step2} holds.
	  
	  Letting $d_1, \ldots, d_j$ range over all allowed values, it follows from Equations~\eqref{eq:step1} and \eqref{eq:step2} that 
	 \begin{align}
	\label{eq:step3}
		(f^\bullet(\cT^*, u_j) \mid \he_{\cT^*}(u_0) \ge j) \eqdist (f^\bullet(\cT_n^*, u_j) \mid \he_{\bar{\cT}}(u_0) \ge j).
\end{align}
	In order to deduce that $\cT^* \eqdist \bar{\cT}$, we need to show that any Borel-measurable set $\cE \subset \fmT^\bullet$  and any $h \ge 1$ it holds that
	\begin{align}
		\label{eq:step4}
		\Pr{f^\bullet(\cT^*, u_h) \in \cE, \he_{\cT^*}(u_0) = h} = \Pr{f^\bullet(\cT_n^*, u_h) \in \cE, \he_{\bar{\cT}}(u_0) = h}.
	\end{align}
	Clearly it suffices to show this when $\cE$ contains only trees $T \in \fmT^\bullet$ with $\he_T(u_0) = h$. In this case, it follows by Equation~\eqref{eq:step3} that
	\begin{align*}
		&\Pr{f^\bullet(\cT^*, u_h) \in \cE, \he_{\cT^*}(u_0) = h} \\ 
				&\quad= 	\Pr{f^\bullet(\cT^*, u_h) \in \cE} - 	\Pr{f^\bullet(\cT^*, u_h) \in \cE, \he_{\cT^*}(u_0) \ge h+1} \\
		&\quad= 	\Pr{f^\bullet(\cT^*, u_h) \in \cE, \he_{\cT^*}(u_0) \ge h} - 	\Pr{f^\bullet(\cT^*, u_h) \in \cE, \he_{\cT^*}(u_0) \ge h+1} \\
		&\quad= 	\Pr{f^\bullet(\bar{\cT}, u_h) \in \cE, \he_{\bar{\cT}}(u_0) \ge h} - 	\Pr{f^\bullet(\bar{\cT}, u_h) \in \cE, \he_{\bar{\cT}}(u_0) \ge h+1} \\
		&\quad=\Pr{f^\bullet(\bar{\cT}, u_h) \in \cE, \he_{\bar{\cT}}(u_0) = h}.
	\end{align*}
	This verifies Equation~\eqref{eq:step4} and hence completes the proof.
\end{proof}

\begin{proof}[Proof of Theorem~\ref{te:complete}]
	Suppose that the weight sequence $\mathbf{w}$ has type II  and that the maximum degree $\Delta(\cT_n)$ has order
	\begin{align}
	\label{eq:assumption}
	\Delta(\cT_n) = (1-\mu)n + o_p(n).
	\end{align}
	By Kortchemski's central limit theorem for $\Delta(\cT_n)$~\cite[Theorem 1]{MR3335012}, we know that this holds for example when $\omega_k = f(k) k^{-\alpha} \rho_\phi^{-k}$ for a constant $\alpha >2$ and a slowly varying function $f$.
	In order to show that 
	\[
		(\cT_n, v_0) \convdis \cT^*,
	\]
	it suffices by Lemma~\ref{le:equ} to show that
	\[
				\he_{\cT_n}(v_0) \convdis \he_{\cT^*}(u_0).
	\]
	
	Let $\tilde{D}_n$ denote the random integer defined in Equation~\eqref{eq:dn} by
	\begin{align*}
		\tilde{D}_n \eqdist (d^+_{\cT_n}(o) \mid d^+_{\cT_n}(o) > \Omega_n).
	\end{align*}
	for any fixed deterministic sequence $\Omega_n$ that tends to infinity slowly enough such that Equation~\eqref{eq:om} holds. Here $o \in \cT_n$ denotes the root-vertex of the tree $\cT_n$. Let $\hat{\cT}_{1n}$ denote the modified Galton--Watson tree constructed in Subsection~\ref{sec:large}. Janson~\cite[Equation (20.2)]{MR2908619} argued that it follows from the assumption \eqref{eq:assumption} that
	\begin{align}
		\Pr{d_{\cT_n}^+(o) = \Delta(\cT_n)} = 1 - \mu + o(1).
	\end{align}
	By Equations~\eqref{eq:om}, \eqref{eq:balls} and \eqref{eq:shift} it holds that
	\begin{align}
		\Pr{d_{\cT_n}^+(o) > \Omega_n} = 1 - \mu + o(1).
	\end{align}
	Using Equation~\eqref{eq:add} it follows that
	\begin{align}
		d_{\textsc{TV}}( \tilde{D}_n, \Delta(\cT_n)) \to 0
	\end{align}
	as $n$ becomes large. In particular there is a sequence $\delta_n \to 0$ such that
	\begin{align}
		\label{eq:cc}
	\tilde{D}_n \in  (1 - \mu \pm \delta_n )n
	\end{align}
	with high probability. For any plane tree $T$ let $(F(T),v(T))$ denote the pointed plane tree obtained by marking the first vertex with outdegree larger than $\Omega_n$ in the depth-first-search ordered list of vertices of $T$ and cutting away all its descendants. The convergence in~\eqref{eq:couple} implies that
	\begin{align}
		\label{eq:dd}
		d_{\textsc{TV}}\left( (F(\cT_n), v(\cT_n), d_{\cT_n}^+(v(\cT_n))), (F(\hat{\cT}_{1n}),v(\hat{\cT}_{1n}), \tilde{D}_n) \right) \to 0.
	\end{align}
	Note that the distribution of $(F(\hat{\cT}_{1n}),v(\hat{\cT}_{1n}))$ does not depend on $n$ and that $|F(\hat{\cT}_{1n})|$ is stochastically bounded.  
	 Given $(F(\cT_n), v(\cT_n), d_{\cT_n}^+(v(\cT_n)))$ the the ordered list of fringe subtrees dangling from the vertex $v(\cT_n)$ in $\cT_n$ are conditionally distributed like a simply generated forest with $d_{\cT_n}^+(v(\cT_n))$ trees and $n - |F(\cT_n)|$ of vertices. That is, if $(F(\cT_n), v(\cT_n))$ is equal to some finite pointed plane tree $F$ and $d_{\cT_n}^+(v(\cT_n)) = \ell$ for some integer $\ell \in (1 - \mu \pm \delta_n)n$ then the list is distributed like the forest \[
	 \left(\cT^1, \ldots, \cT^\ell \mid \sum_{1 \le i \le \ell} |\cT^i| = n - |F|\right)
	 \] with $(\cT^i)_{i \ge 1}$ denoting independent copies of the $\xi$-Galton--Watson tree~$\cT$. Let $(\xi_i)_{i \ge 1}$ denote independent copies of the offspring distribution $\xi$. It follows from~\cite[Lem. 15.3, Thm. 18.1]{MR2908619} that
	 \begin{align}
	 	\label{eq:ff}
	 	\Pr{\sum_{1 \le i \le \ell} |\cT^i| &= n - |F|} = \Pr{\sum_{i=1}^{n - |F|}(\xi_i -1) = -\ell, \sum_{i=1}^k(\xi_i -1) > -\ell \text{ for all $k<|F|$}} \nonumber \\
	 	&= \frac{\ell}{n - |F|} \Pr{\xi_1 + \ldots + \xi_{n - |F|}  = n - |F| -\ell} \nonumber \\
	 	&= \exp(o(1))
	 \end{align}
	 uniformly for all $\ell \in (1 - \mu \pm \delta_n)n.$  For each $k \ge 0$, let
	 \[
		\ell_k: \fmT \to \bar{\ndN}_0,
	 \]
	 denote the  map that sends a tree to its width at height $k$. By the Azuma--Hoeffding inequality it follows that for any integer $r \ge 1$ and any $\epsilon>0$ there are constants $C,c>0$ such that
	 \begin{align}
	 	\label{eq:gg}
	 	\Pr{ |\{ 1 \le i \le \ell \mid \ell_k(\cT^i) = r\}| - \ell \Pr{\ell_k(\cT)=r}| > \epsilon n} \le C \exp(-cn).
	 \end{align}
	 By~\eqref{eq:ff} and \eqref{eq:gg} it follows that for any fixed integer $R\ge1$ 
	 \[
	 	\Pr{ n^{-1} \sum_{1 \le i \le \ell} \ell_k(\cT^i)\one_{\ell_k(\cT^i)} \notin \Ex{\ell_k(\cT) \one_{\ell_k(\cT) \le R}} \pm \epsilon} \to 0
	 \]
	uniformly for all $\ell \in (1 - \mu \pm \delta_n)n.$ As $k$, $R$ and $F$ where fixed but arbitrary, this implies together with \eqref{eq:cc} and \eqref{eq:dd} that for any $t \ge 1$
	\begin{align*}
		\liminf_{n \to \infty} \Pr{\he_{\cT_n}(v_0) = t} &\ge \sum_{i=0}^{t-1} \Pr{\he_{F(\hat{\cT}_{1n})}(v(\hat{\cT}_{1n})) = i} \Ex{\ell_{t-i-1}(\cT)} \\
		&= (1- \mu)^2 \mu^{t-1} t \\
		&= \Pr{\he_{\cT^*}(u_0) = t}.
	\end{align*}
	Hence
	\[
		\he_{\cT_n}(v_0) \convdis \he_{\cT^*}(u_0).
	\]
	By Lemma~\ref{le:equ} it follows that
	\[
		(\cT_n, v_0) \convdis \cT^*
	\]
	in the space $\fmT^\bullet$ of pointed plane trees.
\end{proof}

\subsubsection{The type III regime}

\begin{proof}[Proof of Proposition~\ref{pro:t3}]
We need to show that the following three properties are equivalent.	
	\begin{enumerate}	
		\item  $(\cT_n, v_0) \convdis \cT^{*}$ in $\fmT^\bullet$.
		\item $\he_{\cT_n}(v_0) \convp 1$.
		\item The maximum degree  $\Delta(\cT_n)$ satisfies $\Delta(\cT_n) = n + o_p(n)$.
	\end{enumerate}
It is clear that the first claim implies the second, and that the second claim implies the third. If $\Delta(\cT_n) = n + o_p(n)$, then the vertex with largest degree is with high probability the root \cite[Equation (20.2)]{MR2908619}. So in this case, it follows that $\he_{\cT_n}(v_0) \convp 1$. Hence the third claim implies the second. By Lemma~\ref{le:equ}, it also holds that the second claim implies the first.
\end{proof}

\subsection{Truncated limits and large degrees}

\begin{proof}[Proof of Theorem~\ref{te:trunc}]
		Let $\cT_n^\bullet$ denote the tree $\cT_n$ pointed at the uniformly at random selected vertex $v_0$, and let $v_0, v_1, \ldots$ denote the path from $v_0$ to the root of $\cT_n$.
		Suppose that the weight sequence $\mathbf{w}$ has type II or III. Let $T^\bullet = (T,x)$ denote a finite plane tree that is pointed at vertex different from its root, and let $k$ denote the height of the pointed vertex in $T$. The inner root of the tree $T^\bullet$ will be denoted by $o$.
		
		For all $k_1, k_2 \ge 0$ let $\cE_{k_1, k_2}(T^\bullet)$ denote the set of pointed plane trees obtained by connecting the root vertices of $k_1$ arbitrary plane-trees from the left to the vertex $o$ of $T^\bullet$, and $k_2$ from the right. As we argued in Equation~\eqref{eq:gg2}, there is an ordering $\bar{d}_1, \ldots, \bar{d}_\ell$ of the outdegrees of the vertices $v \ne o$ of the tree $T$ such that with $\bar{d}_0 = d^+_T(o)$ it holds that
		\begin{align}
			\label{eq:starting}
			\Pr{f^\bullet(\cT_n^\bullet, v_k) \in \cE_{k_1, k_2}(T^\bullet)} = \Pr{(Y_0, \ldots, Y_\ell) = (k_1 + k_2 + \bar{d}_0,\bar{d}_1,  \ldots, \bar{d}_\ell)}.
		\end{align}
		For each $n$, let
\[
\cE_{n} = \bigcup_{\substack{k_1 \ge 0, k_2 \ge 0 \\ k_1 + k_2 + \bar{d}_0 \ge \Omega_n}} \cE_{k_1, k_2}.
\]
Setting $\bar{d}_0 = d^+_{T^\bullet}(o)$, it follows that
\begin{align}
\label{eq:s1}
\Pr{H(\cT_n,v_0,\Omega_n) \in \cE_{n}(T^\bullet)} &= \sum_{r \ge \Omega_n}  \sum_{\substack{k_1 + k_2 + \bar{d}_0 = r}} \Pr{(Y_0, \ldots, Y_\ell) = (r,\bar{d}_1,  \ldots, \bar{d}_\ell)} \nonumber \\
&= \sum_{r \ge \Omega_n} (r-\bar{d}_0+1)\Pr{(Y_0, \ldots, Y_\ell) = (r,\bar{d}_1,  \ldots, \bar{d}_\ell)}
\end{align}
By Equation~\eqref{eq:schuessel2}, it follows that
\begin{multline*}
\Pr{H(\cT_n,v_0,\Omega_n) \in \cE_{n}(T^\bullet)} = \\ \Exb{\frac{\sum_{r \ge \Omega_n} rN_r}{n} \prod_{i=1}^\ell \frac{N_{\bar{d}_i}}{n} + (1-\bar{d}_0) \frac{\sum_{r \ge \Omega_n} N_r}{n} \prod_{i=1}^\ell \frac{N_{\bar{d}_i}}{n} + O\left( \frac{\bar{d}_0 \sum_{r \ge \Omega_n} rN_r}{n^2}\right)}.
\end{multline*}
By Equation~\eqref{eq:om} we know that
\[
\frac{\sum_{r \ge \Omega_n} rN_r}{n} = 1 - \mu + o_p(1).
\]
Janson's result \cite[Thm. 11.4]{MR2908619} implies that for each fixed $j$
\[
	\frac{N_j}{n} \convp \Pr{\xi = j}
\]
and hence for each fixed $K$
\[
\frac{\sum_{r \ge K} N_r}{n} = 1 - \frac{\sum_{r < K} N_r}{n} \convp \Pr{\xi \ge K}.
\]
Consequently,
\[
\frac{\sum_{r \ge \Omega_n} N_r}{n} = o_p(1).
\]
By dominated convergence, it follows that
\[
\Pr{H(\cT_n,v_0,\Omega_n) \in \cE_{n}(T^\bullet)} \to (1 - \mu) \prod_{i = 1}^\ell \Pr{\xi = \bar{d}_i}.
\]
Let $V \subset \VHT^\bullet$ denote the subset corresponding to the vertices of the tree $T^\bullet$. Recall that the pointed vertex in $T^\bullet$ has height $k$. It holds that
\[
\Pr{d^+_{\bar{\cT}^{*}}(v) = d^+_{T^\bullet}(v) \text{ for all $v \in V \setminus \{ u_k\}$}, d^+_{\bar{\cT}^{*}}(u_k) = (\infty, \infty) } = (1-\mu)\prod_{i=1}^\ell \Pr{\xi = \bar{d}_i}.
\]
It readily follows that
\[
	H(\cT_n,v_0,\Omega_n) \convdis \bar{\cT}^*
\]
in the space $\fmT^\bullet$.
\end{proof}

Before proceeding with the proof of the main results, we make the following observation.

\begin{lemma}
	\label{le:bound}
	It holds that \[\tilde{D}_n \le (1 - \nu + o(1))n\] with probability tending to $1$ as $n$ becomes large. 
\end{lemma}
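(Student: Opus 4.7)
The plan is to deduce the bound on $\tilde{D}_n$ from a deterministic bound on the maximum degree $\Delta(\cT_n)$, using that in the type II or type III regime we have $\mu = \min(\nu,1) = \nu$.

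First, I would apply Equation~\eqref{eq:om} at the threshold $K_n = \Omega_n$ to obtain
\[
\sum_{k > \Omega_n} k N_k = (1 - \mu) n + o_p(n).
\]
The key deterministic observation is that any vertex of $\cT_n$ with outdegree $k > \Omega_n$ forces $N_k \ge 1$ and hence $k \le k N_k \le \sum_{j > \Omega_n} j N_j$. Combined with Equation~\eqref{eq:add}, this yields
\[
\Delta(\cT_n) \le \max\left\{\Omega_n,\ \sum_{k > \Omega_n} k N_k\right\} = (1 - \mu)n + o_p(n),
\]
so that for every $\epsilon > 0$ the unconditional probability $\Pr{\Delta(\cT_n) > (1 - \mu + \epsilon)n}$ tends to zero.

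Next, the trivial inequality $d^+_{\cT_n}(o) \le \Delta(\cT_n)$ promotes this into a bound on the root degree. I would then transfer it to $\tilde{D}_n$ by dividing through the conditioning probability: the proof of Theorem~\ref{te:complete} already recorded that $\Pr{d^+_{\cT_n}(o) > \Omega_n} = 1 - \mu + o(1)$, which stays bounded away from zero in type II (where $\mu = \nu < 1$) and equals $1 + o(1)$ in type III (where $\mu = 0$). For $n$ large enough that $(1 - \mu + \epsilon)n > \Omega_n$, it follows that
\[
\Pr{\tilde{D}_n > (1 - \mu + \epsilon)n} = \frac{\Pr{d^+_{\cT_n}(o) > (1 - \mu + \epsilon)n}}{\Pr{d^+_{\cT_n}(o) > \Omega_n}} \longrightarrow 0.
\]
Since $\mu = \nu$ in both regimes, this is exactly the claim of the lemma.

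I do not expect any serious obstacle here: the proof is essentially a recombination of facts already established earlier in the paper. The only conceptual point is that conditioning on a ``large'' root degree inflates tails by a factor that is bounded away from zero, so the unconditional deterministic bound on $\Delta(\cT_n)$ passes through to the conditional random variable $\tilde{D}_n$.
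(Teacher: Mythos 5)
Your proof is correct and follows essentially the same route as the paper: both bound $\Delta(\cT_n)$ by $\sum_{k > \Omega_n} k N_k = (1-\nu)n + o_p(n)$ using Janson's Lemma~19.32 (which is what Equation~\eqref{eq:om} at $K_n = \Omega_n$ encodes) and then transfer the bound to $\tilde{D}_n$. The one place where you are slightly more careful is the transfer step: the paper simply writes $\tilde{D}_n \eqdist (d^+_{\cT_n}(o) \mid d^+_{\cT_n}(o) > \Omega_n) \le \Delta(\cT_n)$, which is an abuse since $\tilde{D}_n$ is an independent copy, whereas your division by $\Pr{d^+_{\cT_n}(o) > \Omega_n} = 1 - \mu + o(1)$, which is bounded away from zero, makes the passage from the unconditional bound on $\Delta(\cT_n)$ to the conditioned variable $\tilde{D}_n$ explicit.
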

\begin{proof}
	By Equation~\eqref{eq:dn} we have that
	\[
		\tilde{D}_n \eqdist (d^+_{\cT_n}(o) \mid d^+_{\cT_n}(o) > \Omega_n) \le \Delta(\cT_n)
	\]
	and \cite[Lem. 19.32]{MR2908619} states that
	\begin{align}
		\sum_{v \in \cT_n} d^+_{\cT_n}(v) \one_{d^+_{\cT_n}(v)} = (1 - \nu + o_p(1))n.
	\end{align}
	Hence $\Delta(\cT_n) \le (1 - \nu + o_p(1))n$.
\end{proof}

\begin{proof}[Proof of Theorem~\ref{te:approx}]	
Let $\epsilon>0$ be given, and $m \ge 1$ be arbitrarily large but fixed. The height of the pointed vertex in $\bar{\cT}_n^*$ is stochastically bounded. Hence if $M_1 \ge 1$ is large  enough, the probability for this height to be larger than $M_1$ is less than $\epsilon$ for all $n$. 

The total size of the tree obtained by pruning  $\bar{\cT}_n^*$ at its  vertex with large degree, such that at most $m$ trees to the left and right of its spine offspring remain, is also stochastically bounded. Hence if $M_2 \ge 1$ is large enough the probability for this size to be larger than $M_2$ is at most $\epsilon$ for all $n$. 

By Lemma~\ref{le:bound}, we know that there is a sequence $t_n = o(1)$ such that the probability for the root-degree of $\bar{\cT}_n^*$ to be larger than $(1 - \nu + t_n)n$ tends to zero as $n$ becomes large. By modifying $t_n$ for finitely many $n$ we may also assume that additionally this probability is less than $\epsilon$ for all $n$.

Let $x_1, \ldots, x_r \in \VHT^\bullet$ be given vertices, and let $M_3$ denote the distance from the center $u_0$ to the youngest common ancestor of $x_1, \ldots, x_r$. 

Let $M > M_1, M_2, M_3$ be a fixed constant. Let $V \subset \VHT^\bullet$ correspond to the vertex set of a pointed tree $(T,x)$ with at most $M$ vertices such that $1 \le \he_T(x) \le M$ and the root $o$ has at most $m$ offspring vertices to the left and to the right of its unique son that lies on the spine. 

We are going to show that
\begin{align}
	\label{eq:toshow}
	\sup_{\substack{(k_1, k_2) \\ \Omega_n \le k_1 + k_2 \le (1 - \nu + t_n)n}} \left| \frac{\Pr{d^+_{\cT_n^\bullet}(v) = d^+_T(v) \text{ for $v \in V \setminus\{o\}$}, d^+_{\cT_n^\bullet}(o) = (k_1, k_2)  } }{\Pr{d^+_{\bar{\cT}_n^*}(v) = d^+_T(v) \text{ for $v \in V \setminus\{o\}$}, d^+_{\bar{\cT}_n^*}(o) = (k_1, k_2)  } } -1 \right| \to 0.
\end{align}
Note that the nominator and denominator are either both non-zero or both zero, and we will tacitly only consider the case where this expression is well-defined. In particular, this entails considering only trees $T$ such that
\[
\Pr{\xi = d_T^+(v)} >0
\]
for all vertex $v \in V(T) \setminus \{o\}$. There are only finitely many choices for $T$ and $V$. Hence the limit \eqref{eq:toshow}  implies that
\[
	d_{\textsc{TV}}( (d^+_{H(\cT_n,v_0, \Omega_n)}(x_i))_{1 \le i \le r}, (d^+_{\bar{\cT}_n^*}(x_i))_{1 \le i \le r}) \le 3 \epsilon
\]
for large enough $n$. As $\epsilon>0$ was arbitrary, this suffices to prove the claim.

It remains to verify the limit in \eqref{eq:toshow}. We may assume that $n$ is large enough such that $\Omega_n > M$. By Equation~\eqref{eq:starting}, we may order the outdegrees of $T$ by $(\bar{d}_0, \ldots, \bar{d}_\ell)$ such that $\bar{d}_0 = d^+_T(o)$ and for all $k_1, k_2$
\begin{multline*}
\Pr{d^+_{\cT_n^\bullet}(v) = d^+_T(v) \text{ for $v \in V \setminus\{o\}$}, d^+_{\cT_n^\bullet}(o) = (k_1, k_2)}  \\
= \Pr{(Y_0, \ldots, Y_\ell) = (K,\bar{d}_1,  \ldots, \bar{d}_\ell)}.
\end{multline*}
with \[
K = k_1 + k_2 + \bar{d}_0.
\] 
By the construction of $\bar{\cT}_n^*$ it holds that
\begin{multline*}
\Pr{d^+_{\bar{\cT}_n^*}(v) = d^+_T(v) \text{ for $v \in V \setminus\{o\}$}, d^+_{\bar{\cT}_n^*}(o) = (k_1, k_2)  } = \\K^{-1} (1-\mu) \Pr{\tilde{D}_n = K} \prod_{i=1}^\ell \Pr{\xi = \ell}.
\end{multline*}
Thus
\begin{multline*}
\frac{\Pr{d^+_{\cT_n^\bullet}(v) = d^+_T(v) \text{ for $v \in V \setminus\{o\}$}, d^+_{\cT_n^\bullet}(o) = (k_1, k_2)  } }{\Pr{d^+_{\bar{\cT}_n^*}(v) = d^+_T(v) \text{ for $v \in V \setminus\{o\}$}, d^+_{\bar{\cT}_n^*}(o) = (k_1, k_2)  } } = \\
\frac{K \Pr{Y_0 = K}}{(1-\mu)\Pr{\tilde{D}_n = K}} \Pr{(Y_1, \ldots, Y_\ell) = (\bar{d}_1,  \ldots, \bar{d}_\ell) \mid Y_0 = K} \left( \prod_{i=1}^\ell \Pr{\xi = \ell} \right)^{-1}.
\end{multline*}
By Equation~\eqref{eq:dn} and $K \ge \Omega_n$ it holds that
\[
\Pr{\tilde{D}_n = K} = \Pr{d^+_{\cT_n}(o) = K} / \Pr{d^+_{\cT_n}(o) > \Omega_n}.
\]
For any integer $k \ge 0$ it holds by the discussion in Section~\eqref{sec:preli} and in particular Equation~\eqref{eq:shift} that
\[
	\Pr{d_{\cT_n}^+(o) =k} = \frac{nk}{n-1} \Pr{Y_0=k}.
\]
See also \cite[Lemma 15.7]{MR2908619}. Hence 
\[
\Pr{\tilde{D}_n = K} = \frac{K \Pr{Y_0 = K}}{ \sum_{k >\Omega_n} k \Pr{Y_0=k}}.
\]
It follows by Equation~\eqref{eq:om} that the term
\[
\frac{K \Pr{Y_0 = K}}{(1-\mu)\Pr{\tilde{D}_n = K}} = \frac{\sum_{k >\Omega_n} k \Pr{Y_0=k}}{1-\mu} 
\]
does not depend on $K$ at all and converges toward $1$. Thus, in order to verify the limit \eqref{eq:toshow}, it remains to show that
\begin{align}
\label{eq:laststand}
\Pr{(Y_1, \ldots, Y_\ell) = (\bar{d}_1,  \ldots, \bar{d}_\ell) \mid Y_0 = K} \left( \prod_{i=1}^\ell \Pr{\xi = \ell} \right)^{-1} \to 1
\end{align}
uniformly for all $\Omega_n \le K \le (1 - \mu + t_n)n$. Note that
\begin{multline*}
\Pr{(Y_1, \ldots, Y_\ell) = (\bar{d}_1,  \ldots, \bar{d}_\ell) \mid Y_0 = K} = \\ \Pr{(Y_1^{(n-1-K,n-1)}, \ldots, Y_\ell^{(n-1-K,n-1)}) = (\bar{d}_1,  \ldots, \bar{d}_\ell) }
\end{multline*}
For ease of notation, let us set $Y_i' = Y_i^{(n-1-K,n-1)}$ for all $i$ and let $N_k^{(n-1-K,n-1)} = N_k'$ denote the number of indices with $i$ with $Y_i' = k$. Similarly as in   Equation~\eqref{eq:s0} it holds that
\begin{align*}
\Pr{(Y_1', \ldots, Y_\ell') = (\bar{d}_1,  \ldots, \bar{d}_\ell) \mid N_0', N_1', \ldots} &= \prod_{i=1}^\ell \frac{N'_{\bar{d}_i} - c_i }{n-i} 
\end{align*}
with $c_i$ denoting the number of $1 \le j <i$ with $\bar{d}_j = \bar{d}_i$. It is elementary that
\[
\prod_{i=1}^\ell \frac{N'_{\bar{d}_i} - c_i }{n-i} 
= (1 + O(n^{-1})) \prod_{i=1}^\ell \frac{N'_{\bar{d}_i}}{n}.
\]
with the implicit bound in the $O(n^{-1})$ term not depending on $K$. 

 Recall that in Section~\ref{sec:types} we defined $\phi(z) = \sum_{k \ge 0} \omega_k z^k$,  $\psi(z) = z \phi'(z) / \phi(z)$, and a parameter $\tau$.  As the weight sequence $(\omega_i)_i$ has type II or III, it holds that $\tau=\rho_\phi$ is the radius of convergence of $\phi(z)$. In Section~\ref{sec:gwt} we defined furthermore $\Pr{\xi=k} = \omega_k \tau^k/ \phi(\tau)$ for all $k$. Janson~\cite[Theorem 11.6]{MR2908619} gave the following result. The function
\[
	\tau: [0, \infty[ \to [0, \infty], \quad x \mapsto \sup\{t \le \rho \mid \psi(t) \le x\}.
\]
is continuous. For $x \le \nu = \mu$ it holds that $\tau(x)$ is the unique number with $\psi(\tau(x))=x$, and for $x > \nu$ it holds that $\tau(x) = \rho_\phi = \tau$. Furthermore, for each fixed non-negative integer $d$ it holds uniformly for all $m\le n$ that
\[
\frac{N_d^{(m,n)}}{n} - \frac{\omega_d \tau(m/n)^d}{\phi(m/n)} \convp 0.
\]

We assumed that $K \le (1 - \mu +t_n)n$ with $t_n = o(1)$. In particular, 
\[
(n-1-K)/(n-1) \sim \mu
\]
uniformly for all $K$. Thus 
\[
\tau((n-1-K)/(n-1)) \sim \tau
\]
and consequently
\[
	\frac{N_{\bar{d}_i}'}{n} - \Pr{\xi = \bar{d}_i} \convp 0
\]
uniformly for all $K$. As $\Pr{\xi = \bar{d}_i}>0$ for all $i$, it follows by dominated convergence that 
\[
\Pr{(Y_1', \ldots, Y_\ell') = (\bar{d}_1,  \ldots, \bar{d}_\ell) }\left( \prod_{i=1}^\ell \Pr{\xi = \ell} \right)^{-1} \to 1 
\]
uniformly for all $K$. This verifies Equation~\eqref{eq:laststand} and hence completes  the proof.
\end{proof}

\section*{Acknowledgement}
I warmly thank the editor and referees for the helpful suggestions and thorough reading of the manuscript.

\bibliographystyle{siam}
\bibliography{pointed}

\end{document}